\newtheorem{theorem}{Theorem}
\newtheorem{lemma}[theorem]{Lemma}
\newtheorem{corollary}[theorem]{Corollary}
\theoremstyle{definition}
\newtheorem{definition}[theorem]{Definition}
\theoremstyle{remark}
\newtheorem{example}[theorem]{Example}
\begin{document}

\renewcommand{\labelenumii}{(\roman{enumii})}

\def\printname#1{
	
		\smash{\makebox[0pt]{\hspace{-0.5in}
			\raisebox{8pt}{\tt\tiny #1}}}
}
\def\lbl#1{\label{#1}
\printname{#1}}

\newcommand{\Z}{\mathbb{Z}}
\newcommand{\F}{\mathcal{F}}
\newcommand{\calL}{\mathcal{L}}
\newcommand{\bbL}{\mathbb{L}}
\newcommand{\V}{\mathcal{V}}
\newcommand{\E}{\mathcal{E}}
\newcommand{\M}{\mathcal{M}}

\newcommand{\ar}[1]{\overrightarrow{#1}}

\newcommand{\BG}{{\bf G}}
\newcommand{\bv}{{\bf v}}
\newcommand{\BV}{{\bf V}}
\newcommand{\BE}{{\bf E}}
\newcommand{\BA}{{\bf A}}
\newcommand{\be}{{\bf e}}
\newcommand{\BB}{{\bf B}}
\newcommand{\BF}{{\bf H}}
\newcommand{\ba}{{\bf a}}
\newcommand{\bb}{{\bf b}}
\newcommand{\bu}{{\bf u}}

\newcommand{\p}{{\prime}}

%\newcommand{\BG}{\mathbb{G}}
%\newcommand{\BV}{\mathbb{ V}}
%\newcommand{\BE}{\mathbb{ E}}
%\newcommand{\BA}{\mathbb{ A}}

%\referee

\title[ A characterization of partially dual graphs ]{ A characterization of partially dual graphs}

\author[I.~Moffatt]{Iain Moffatt$^*$}

\begin{abstract}
In this paper, we extend the recently introduced  concept of partially dual ribbon graphs  to graphs. We then go on to  characterize  partial duality of graphs in terms of bijections between edge sets of corresponding graphs. This result generalizes a well known result of J.~Edmonds in which natural duality of graphs is characterized in terms of edge correspondence, and gives a combinatorial characterization of partial duality. 

\end{abstract}

\thanks{
%1991 Mathematics Classification: 57M15.\\
${\hspace{-1ex}}^*$
Department of Mathematics and Statistics,  University of South Alabama, Mobile, AL 36688, USA; \\
${\hspace{.35cm}}$ \includegraphics[width=43mm]{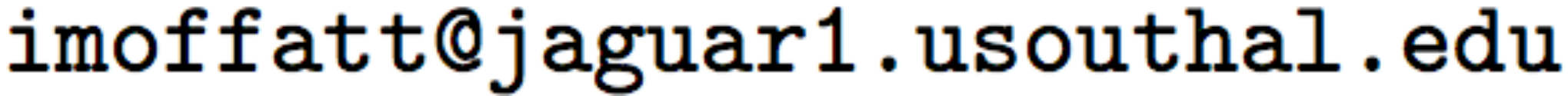}}

\date{\today}

\maketitle

\section{Introduction and motivation}\label{s.intro}
S.~Chmutov recently introduced the concept of the partial dual $\BG^{\BA}$of a ribbon graph $\BG$ (\cite{Ch1}). Partial duality generalizes the  natural dual (or Euler-Poincar\'e dual or geometric dual) of a ribbon graph by forming the dual of $\BG$ only with respect to a subset of its edges $\BA$ (a formal definition of partial duality is given in Section~\ref{s.pd}). 
In contrast with natural duality, where the topologies  of $\BG$ and $\BG^*$ are similar, the topology of a partial dual $\BG^{\BA}$ can be very different from the topology of $\BG$.
For example, 
although a ribbon graph and its natural dual always have the same genus, a ribbon graph and a partial dual need not.

As one would expect with a generalization of duality, partial duality has desirable properties. For example, (up to normalization and specialization) the weighted (Bollob\'{a}s-Riordan) ribbon graph polynomials of $\BG$ and $\BG^{\BA}$ are equal (see \cite{Ch1, Mo2} and \cite{VT} for a multivariate version). This generalizes the well known relation between the Tutte polynomial of a plane graph and its natural dual: $T(G;x,y)=T(G^*;y,x)$.

One particularly significant application of partial duality is to be found in knot theory. Recently  
there has been a lot of interest in the connection between knots, knot invariants, ribbon graphs and ribbon graph polynomials (\cite{CP,CV,Ch1,Da,Mo1,Mo2,Mo3}). Partial duality provides a way to connect these recent results with each other (see \cite{Mo2}, where the ``unsigning'' process is a special case of partial duality, and \cite{Ch1}). 

We would expect many other properties of the natural dual to extend  to partial duality. Here we are interested in generalizing a theorem of J.~Edmonds from \cite{Ed} by finding a characterization of partial duality in terms of a bijection between edge sets. Edmonds' characterization of dual graphs generalizes Whitney's well known characterization of planar graphs in terms of (combinatorial) duals from \cite{Wh}. It reads  as follows:
\begin{theorem}[Edmonds \cite{Ed}]
A 1-1 correspondence between the edges of two connected graphs is a duality with respect to some polyhedral surface embedding if and only if for each vertex $v$ of each graph, the edges which meet $v$  correspond in the other graph  to the edges of of a subgraph $G_v$ which is connected and has an even number of edge-ends to each of its vertices (where if an edge meets $v$ at both ends its image in $G_v$ is counted twice.)
\end{theorem}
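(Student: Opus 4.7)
The plan is to prove both directions, with necessity being essentially formal and sufficiency requiring a construction. For necessity, suppose $G$ is cellularly embedded in a surface $\Sigma$ with geometric dual $G^*$. Each vertex $v$ of $G$ lies inside a unique face $f$ of $G^*$, and the edges of $G$ incident to $v$ correspond bijectively to the edges of $G^*$ traversed by the boundary walk of $f$. Because $f$ is a disc, its boundary is a single closed walk in $G^*$, so the subgraph of $G^*$ that this walk traces out is connected, and at each of its vertices the number of edge-ends along the walk is even (it is twice the number of times the walk passes through that vertex). This exhibits $G_v$ as Eulerian, and symmetry handles the condition at vertices of $G^*$.

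For sufficiency, I would use the hypothesized Eulerian subgraphs to build a rotation system on $G$ whose induced embedding has $G^*$ as geometric dual. Concretely, for each $v \in V(G)$ choose an Eulerian circuit of $G_v$ (which exists since $G_v$ is connected with all degrees even); via the edge bijection this circuit produces a cyclic order on the edges of $G$ meeting $v$. Doing this at every vertex of $G$ gives a rotation system, and hence a cellular embedding of $G$ in some (possibly non-orientable) surface $\Sigma$. It remains to show that the faces of this embedding stand in bijection with $V(G^*)$ in such a way that the boundary walk of the face at $w\in V(G^*)$ traverses precisely the edges of $G$ corresponding to the edges of $G^*$ meeting $w$; this is what makes the given edge-bijection a duality for the embedding.

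The main obstacle is ensuring that the rotation systems on $G$ and $G^*$ are mutually dual, so that the face-tracing procedure on $G$ reproduces exactly the rotations at vertices of $G^*$ (and vice versa). I expect to handle this at the level of flags (edge-ends): at each edge, the Eulerian condition pairs up the four incident flags consistently, and connectedness of each $G_v$ and each $G^*_w$ then lets local compatibility propagate to a coherent global choice. An alternative, arguably cleaner, realisation is to build $\Sigma$ polygon-by-polygon, assigning one polygon to each vertex $w\in V(G^*)$ whose edge labels are read off (in the chosen cyclic order) from an Eulerian circuit of $G_w^*$, and then gluing these polygons along edges according to the bijection. The Eulerian condition on each $G_v$ is exactly what is needed to ensure that the identification around each vertex $v$ of $G$ produces a disc neighbourhood, so that $\Sigma$ is a closed surface and the embedding of $G$ is cellular with $G^*$ as geometric dual.
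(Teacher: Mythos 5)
Your proposal is correct and matches the paper's treatment: the necessity direction is the paper's Lemma~\ref{l.ed}, and your ``alternative'' polygon-by-polygon construction --- one polygon per vertex, sides labelled by an Eulerian circuit of the corresponding subgraph, glued along the edge bijection, with the Eulerian condition on the other graph guaranteeing disc neighbourhoods and hence a closed surface --- is precisely the sketch the paper gives (following Edmonds) before deferring details to \cite{Ed}. The only caveat is that your first route via pure rotation systems would only yield orientable embeddings, but the polygon-gluing realisation you settle on avoids this.
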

We will refer to the conditions that the bijection in Edmonds' theorem  satisfies  as {\em Edmonds' Criteria}.

We will say that two graphs are partial duals if they are the cores of partially dual ribbon graphs. (The core of a ribbon graph as defined in Subsection~\ref{ss.nd}. Essentially the core of a ribbon graph is the graph obtained by ``forgetting the topological structure'' of a ribbon graph.) Suppose that $G$ and $G^A$ are partial dual graphs and $\natural(A)$ is the set of edges of $G^A$ that correspond with the set $A$. If  $A^c:=\E(G)\backslash A$ and  $\natural(A^c)=\E(G^A)\backslash \natural(A)$,  it turns out that    the graphs $G\backslash A^c$ and $G^A\backslash \natural(A^c)$ are natural duals and can therefore be dually embedded in a surface $\Sigma$.  This dual embedding does not record any information about the edges in $A$ or $\natural(A)$. However, we will see that all of the information about these two sets of edges can be recorded by placing a set of embedded edges between the vertices of the dual embeddings of $G\backslash A^c$ and $G^A\backslash \natural(A^c)$. Moreover, we will see that  such a structure characterizes partially dual graphs.  

Our extension of Edmonds' theorem follows from the fact that partial duality is characterized by the existence of two dually embedded graphs that are  decorated with edges in a certain way. Within this structure the dually embedded graphs are characterized by a bijection satisfying Edmonds' criteria.   We can extend this bijection so that it also describes the decorating edges, to obtain a characterization of partial duality in terms of a bijection between edge sets. This gives us our main theorem which reads as follows:
\begin{theorem}[Main Theorem.]
Two graphs $G$ and $H$ are partial duals if and only if there exists a bijection $\varphi: \E(G) \rightarrow \E(H)$, such that 
\begin{enumerate}
\item $\left. \varphi\right|_{A}: \E(G)\backslash A^c\rightarrow \E(H)\backslash \varphi(A^c)$ satisfies Edmonds' Criteria for some subset $A\subseteq \E(G)$;
\item If $v\in \V(G)$ is incident to an edge in $A$, and if $e\in \E(G)$ is incident to $v$, then $\varphi(e)$ is incident to a vertex of $\varphi(A)_v$. Moreover, if both ends of $e$ are incident to $v$, then both ends of $\varphi(e)$ are incident to vertices of $\varphi(A)_v$.

\item If $v\in \V(G)$ is not incident to an edge in $A$, then there exists a vertex $v' \in \V(H)$ with the property that  $e\in \E(G)$ is incident to $v$ if and only if  $\varphi(e)\in \E(H)$ is incident to $v'$. Moreover, both ends of $e$ are incident to $v$ if and only if both ends of $\varphi(e)$ are incident to $v'$.

\end{enumerate}
Here $\varphi(A)_v $ is the subgraph of $H$ induced by the images of the edges from $A$ that are incident with $v$. 
\end{theorem}
  This theorem  answers Chmutov's Problem~3 from Section~7 of \cite{Ch1} where it was asked if Edmonds' Theorem could be extended to partial duality.

\smallskip

The paper is structured as follows. We give the definition of a ribbon graph and provide two combinatorial presentations of ribbon graphs in Section~\ref{s.rg}. Section~\ref{s.pd} examines the relation between  partial and natural duality. In particular a description of partial dual ribbon graphs as a pair of embedded, marked, naturally dual ribbon graphs is given. This description of partial duals is used in  Section~\ref{s.pdg} to give a characterization of partial dual graphs as a pair of decorated, dually embedded graphs. This structure is described in terms of a bijection between edge sets of graphs in  Subsection~\ref{ss.ged}, giving our generalization of Edmonds' Theorem.

\smallskip

I am indebted to the anonymous referee whose suggestions greatly improved the statement of this paper's  main theorem.

%%%

\section{Ribbon graphs}\label{s.rg}
 In this short section we define ribbon graphs and provide two other combinatorial descriptions of ribbon graphs that we will need. We emphasize the fact that our ribbon graphs may be non-orientable, and therefore contain more information than combinatorial maps. 

\begin{definition}
A {\em ribbon graph} $\BG =\left(  \V(\BG),\E(\BG)  \right)$ is a (possibly non-orientable) surface with boundary represented as the union of two  sets of topological discs: a set $\V (\BG)$ of {\em vertices}, and a set of {\em edges} $\E (\BG)$ such that 
\begin{enumerate}\renewcommand{\labelenumi}{(\roman{enumi})}
\item the vertices and edges intersect in disjoint line segments;
\item each such line segment lies on the boundary of precisely one
vertex and precisely one edge;
\item every edge contains exactly two such line segments.
\end{enumerate}
\end{definition}

Ribbon graphs are considered up to  homeomorphisms of the surface that preserve the vertex-edge structure.  
The embedding of a ribbon graph in three-space is irrelevant. 

 \begin{figure}
\begin{tabular}{ccccc}
\includegraphics[width=40mm]{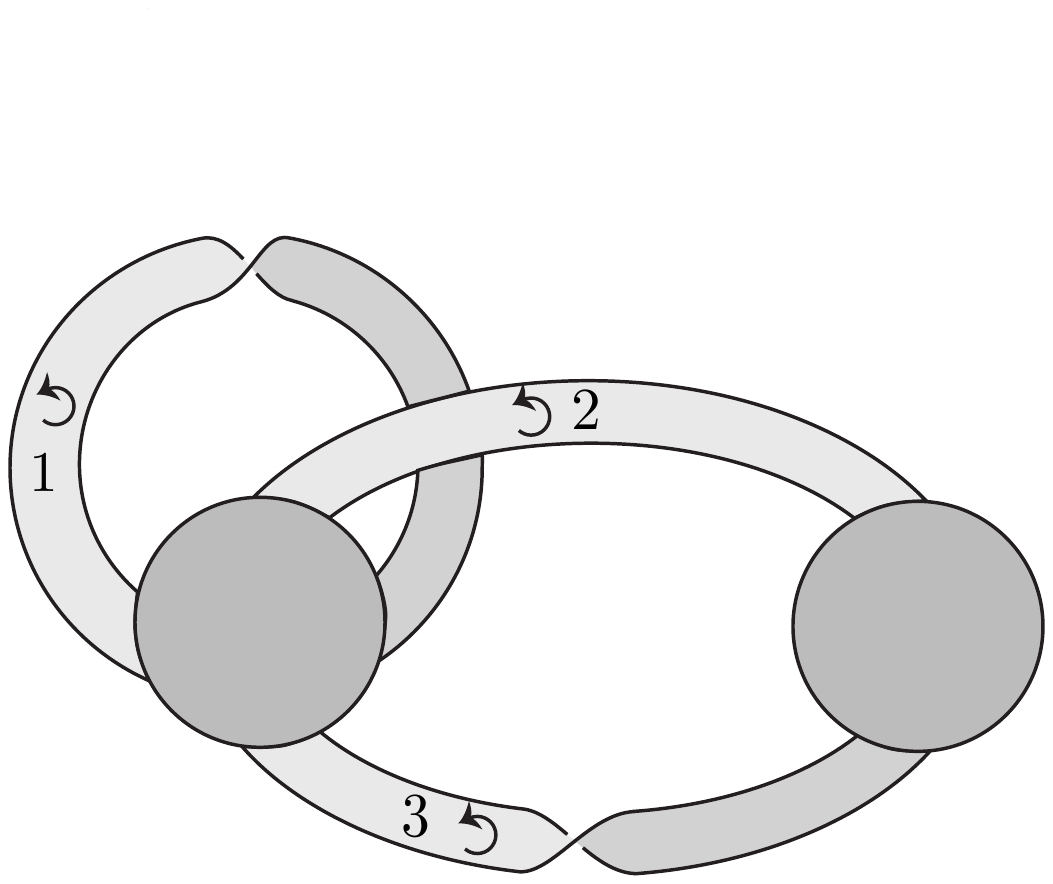} &\raisebox{9mm}{ = }& \includegraphics[width=40mm]{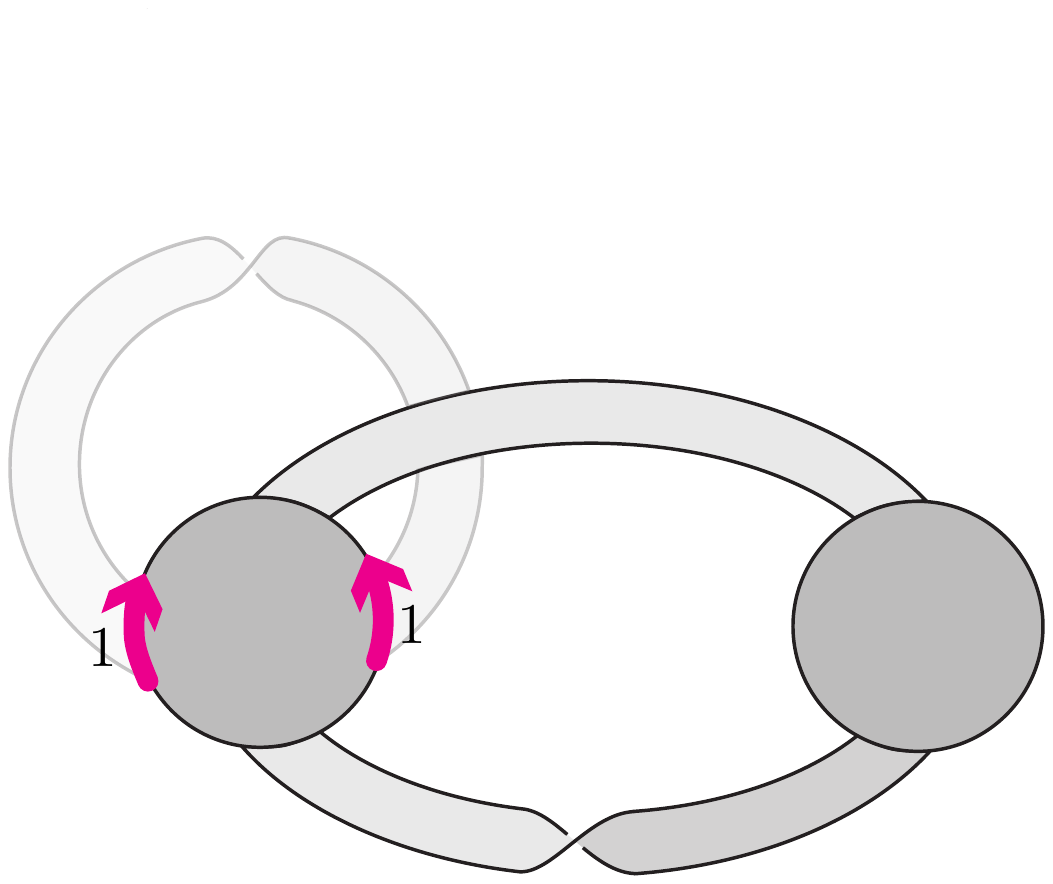} &\raisebox{9mm}{ = }& \includegraphics[width=40mm]{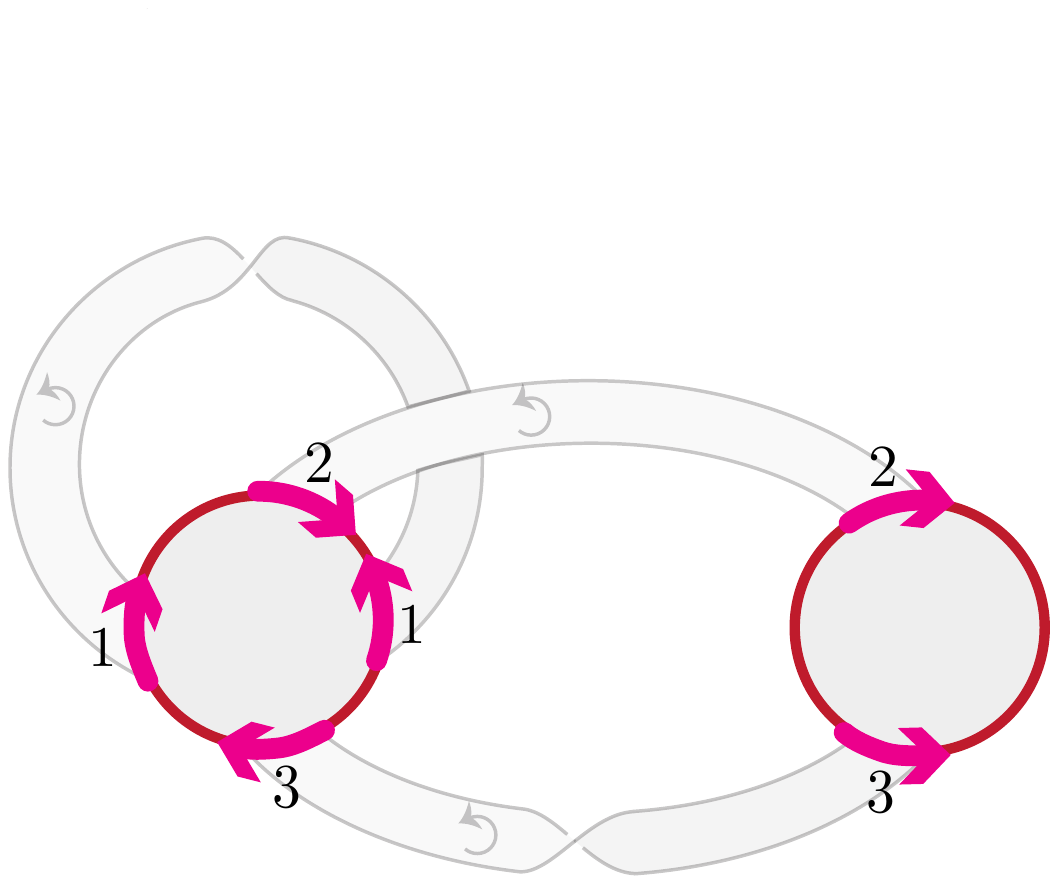} \\
(i)&&(ii)& &(iii)  
\end{tabular}
\caption{Realizations of a ribbon graph}
\label{fig.rgex}
\end{figure}

We will always assume that there is some distinct labelling of the edges in the set $\E(\BG)$  of edges  of a ribbon graph $\BG$. This allows us to abuse language and say that  we  colour an object by $\be$ in $\E(\BG)$, when what we mean that we colour the object with the unique label of $\be$.

\bigskip

It will be convenient to use a description of a ribbon graph $\BG$ as a spanning sub-ribbon graph equipped with a set of coloured arrows that record where the missing edges of the ribbon graph were. A  {\em spanning sub-ribbon graph} of $\BG$ is a ribbon graph $\BF$ which can be obtained from $\BG$ by deleting some edges.

\begin{definition}
An {\em arrow-marked ribbon graph} $\ar{\BG}$ consists of a ribbon graph $\BG$ equipped with a collection of  coloured arrows, called {\em marking arrows}, on the boundaries of its vertices. The marking arrows are such that no marking arrow meets an edge of the ribbon graph, and    there are exactly two marking arrows of each colour.
\end{definition}

Arrow-marked ribbon graphs are considered equivalent if one can be obtained from the other by reversing the direction of all of the marking arrows which belong to some subset of colours.

A ribbon graph can be obtained from an arrow-marked ribbon graph by adding edges in a way prescribed by the marking arrows, thus: take a disc (this disc will form the new edge) and orient its boundary arbitrarily. Add this disc to the ribbon graph by choosing two non-intersecting arcs on the boundary of the disc and two marking arrows of the same colour, and then identifying the arcs with the marking arrows according to the orientation of the arrow. The disc that has been added forms an edge of a new ribbon graph. 
This process is illustrated in the diagram below, and an example of an arrow-marked ribbon graph and the ribbon graph it describes  is given in Figures~\ref{fig.rgex}(i) and (ii).
\[\includegraphics[height=15mm]{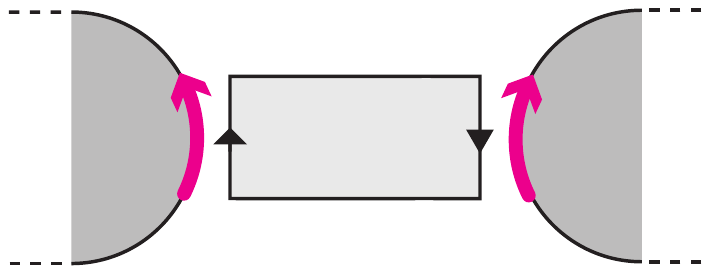} 
\raisebox{6mm}{\hspace{3mm}\includegraphics[width=12mm]{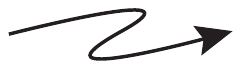}\hspace{3mm}}
  \includegraphics[height=15mm]{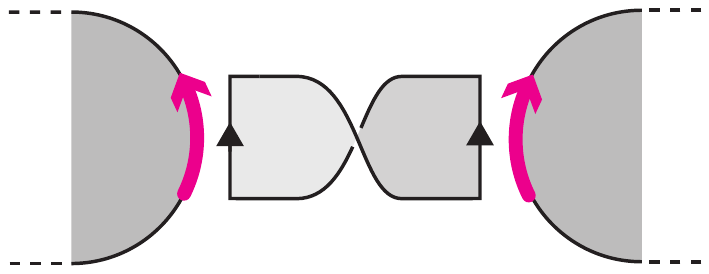} 
  \raisebox{6mm}{\hspace{3mm}\includegraphics[width=12mm]{arrow}\hspace{3mm}} \includegraphics[height=15mm]{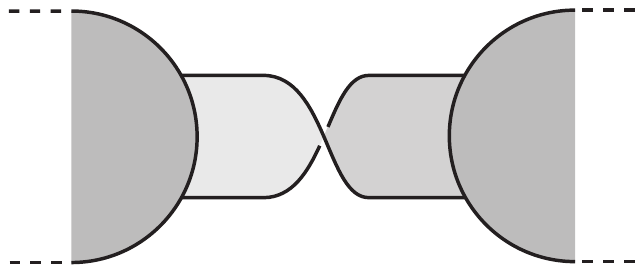} \]

By the above, we know that an arrow-marked ribbon graph describes a ribbon graph. Conversely, every ribbon graph can be described as an arrow-marked spanning sub-ribbon graph. 
To see why this is, suppose that $\BG$ is a ribbon graph and $\BB \subseteq \E(\BG)$.  To describe $\BG$ as an arrow-marked ribbon graph $\ar{\BG\backslash \BB}$, start by arbitrarily orienting each edge in $\BB$. This induces an orientation on the boundary of each edge in $\BB$. To construct the marking arrows: for each $\be\in \BB$, place an arrow on each of the two arcs where $\be$ meets vertices of $\BG$, the direction of this arrow should follow the orientation of the boundary of $\be$; colour the two arrows with $\be$; and delete the edge $\be$. This gives a marked ribbon graph $\ar{\BG\backslash \BB}$. Moreover, the original ribbon graph $\BG$ can be recovered from $\ar{\BG\backslash \BB}$ by adding edges to $\ar{\BG\backslash \BB}$ as prescribed by the marking arrows. 

Notice that if $\BG$ is a ribbon graph and $\BF$ is any spanning sub-ribbon graph, then there is an arrow-marked ribbon graph  of $\ar{\BF}$which describes  $\BG$.

\bigskip

Every  ribbon graph $\BG$ has a representation as an arrow-marked ribbon graph $\ar{\V(\BG)}$, where the spanning sub-ribbon graph consists of the vertex set  of $\BG$.   In such cases, to describe $\BG$ it is enough to record only the marked boundary cycles of the vertex set (to recover the vertex set, just place each cycle on the boundary of a disc).  Thus a ribbon graph can be presented as a set of cycles with marking arrows on them. In such a structure, there are exactly two marking arrows of each colour. 
Such a structure is called an {\em arrow presentation}. A ribbon graph can be recovered from an arrow presentation by regarding the marked cycles as boundaries of discs, giving an arrow-marked ribbon graph. To describe this more formally:
\begin{definition}
An  {\em arrow presentation} of a ribbon graph consists of a set of oriented (topological) circles (called {\em cycles}) that are marked with coloured arrows, called {\em marking arrows}, such that there are exactly two marking arrows of each colour.   
\end{definition} 
An example of a ribbon graph and its arrow presentation is given in Figure~\ref{fig.rgex}(i) and (iii).

Two arrow presentations are considered equivalent if one can be obtained from the other by reversing pairs of marking arrows of the same colour.

\section{Partial duality}\label{s.pd}
As mentioned above, partial duality is a generalization of the natural dual of a ribbon graph. 
A key feature of partial duality is that it provides a  way extend the well known relation $T(G;x,y)=T(G^*;y,x)$, relating the Tutte polynomial of a planar graph and its dual, to the weighted ribbon graph polynomial. This extension is of interest to knot theorists as it provides a unification of recent results relating the Jones polynomial and Bollob\'as and Riordan's ribbon graph polynomial.

In this section we give a definition of partial duality and then go on to discuss the relationship between partial duals and naturally dual arrow-marked ribbon graphs. This gives rise to the notion of a partial dual embedding of ribbon graphs, an idea that will play a key role in our generalization of Edmonds' Theorem in Section~\ref{s.pdg}.

\subsection{Partial duality}

Although the construction of the partial dual $\BG^{\BA}$ of $\BG$ is perhaps a little lengthy to write down, in practice the formation of the partial dual is a straightforward process.

\begin{definition}
Let $\BG$ be a ribbon graph and $\BA \subseteq \E(\BG)$. The {\em partial dual} $\BG^{\BA}$ of $\BG$ along $\BA$ is defined below. (The construction is shown locally at an edge $\be$ in Figure~\ref{fig.pdd}.)
\begin{enumerate}\renewcommand{\theenumi}{Step P\arabic{enumi}.}
\item Give every edge in $\E(\BG)$ an orientation (this need not extend to an orientation of the whole ribbon graph). Construct a set of marked, oriented, disjoint paths on the boundary of the edges of $\BG$ in the following way:
\begin{enumerate}
\item If $\be\notin \BA$ then the intersection of the edge $\be$ with its incident vertices (or vertex if $\be$ is a loop) defines the two paths. Mark  each of these paths with an arrow which points in the direction of the orientation of the boundary of the edge.  Colour both of these marks with $\be$ .
\item If $\be\in \BA$ then the two sides of $\be$ which do not meet the vertices define the two paths.  Mark  each of these paths with an arrow which points in the direction of the orientation of the boundary of the edge.  Colour both of these marks with $\be$ .
\end{enumerate}
\begin{figure}
\begin{tabular}{ccccc}
 \includegraphics[width=3cm]{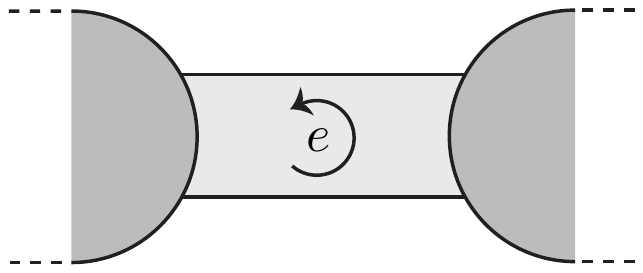} & \hspace{1cm} & \includegraphics[width=3cm]{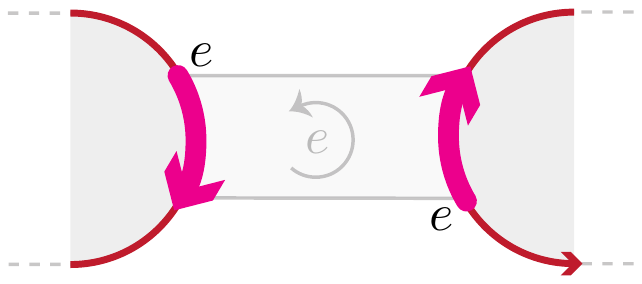} & \hspace{1cm} &   \includegraphics[width=3cm]{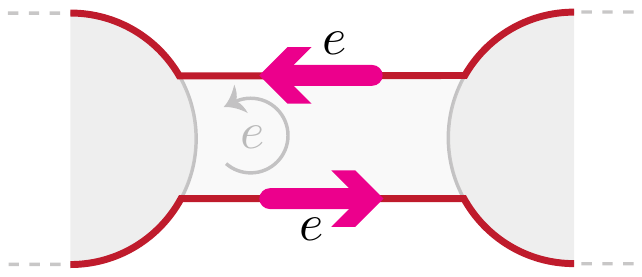}  \\ & &&& \\
 An untwisted edge $\be$. &  &If $\be\notin \BA$. & & If $\be\in \BA$. \\
  \includegraphics[width=3cm]{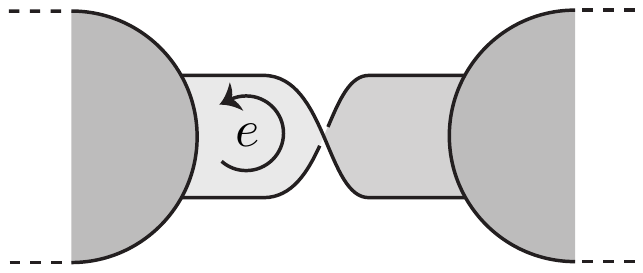} & \hspace{1cm} & \includegraphics[width=3cm]{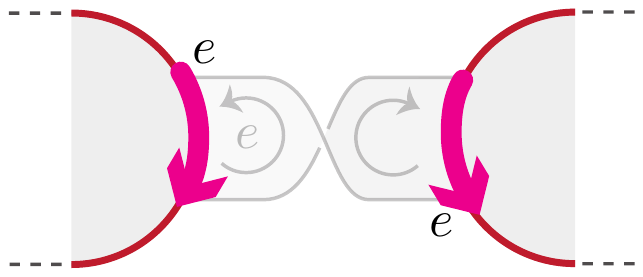} & \hspace{1cm} &   \includegraphics[width=3cm]{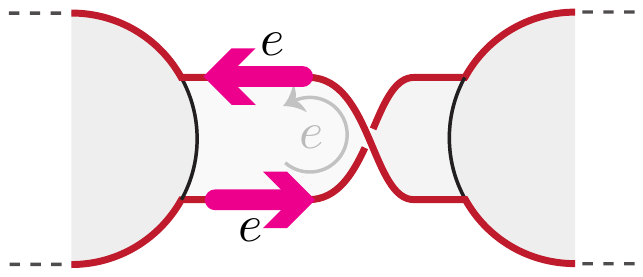}  \\ & &&& \\
A twisted edge $\be$. &  &If $\be\notin \BA$. & & If $\be\in \BA$.
 \end{tabular}
\caption{Forming paths in the partial dual.}
\label{fig.pdd}
\end{figure}

\item Construct a set of marked, disjoint paths on the boundary of $\BG$ by joining the marked paths constructed above by connecting them along the boundaries of the   vertices of $\BG$ in the natural way.

\item This defines a collection of  non-intersecting, closed curves on the boundary of $\BG$ which are marked with coloured, oriented arrows. This is precisely an arrow presentation of a ribbon graph. The corresponding ribbon graph is the partial dual $\BG^{\BA}$.    

%\item The discs in the vertex set have coloured, marked points on their boundaries. There are exactly two marked points coloured by $\be$ for each colour $\be\in \E(\BG)$.  
%  For every colour $e\in E(G)$,  add an (oriented) edge in such a way that two of the opposite sides of the added edge are identified with an $e$ coloured mark and the orientation of the sides of the edge match the orientation of of the marking arrow.  
%This gives the ribbon graph $\BG^{\BA}$.
\end{enumerate}
\end{definition}

Two examples of the construction of a partial dual are shown below.
\begin{example}\label{ex.dex1} ~

 \begin{center}
\begin{tabular}{ccccc}
 \includegraphics[width=4cm]{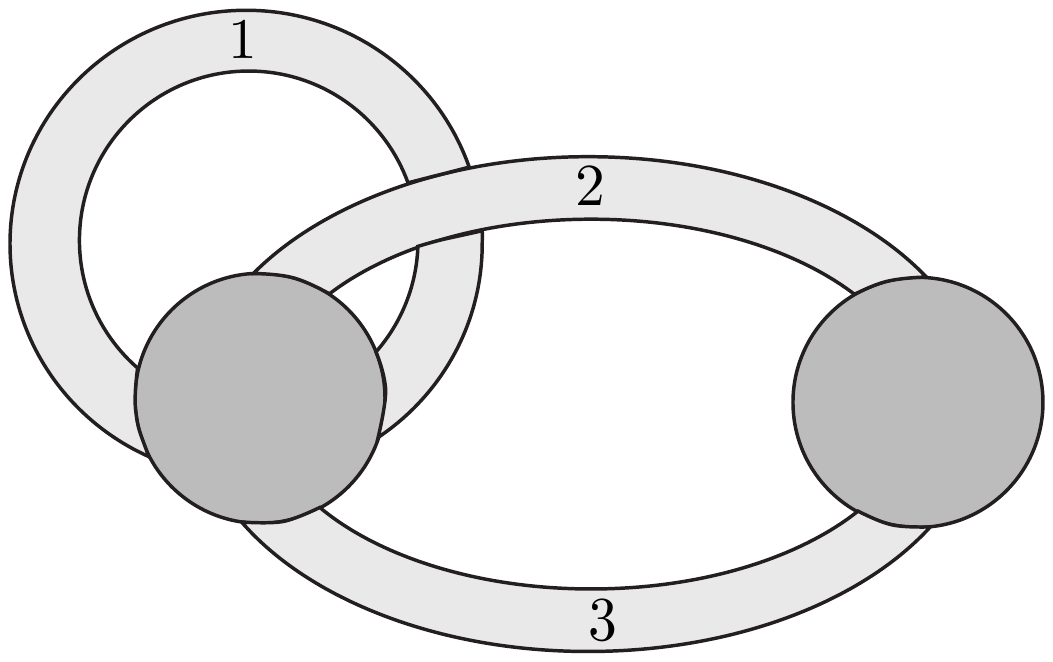}  &\hspace{3cm}& \includegraphics[width=4cm]{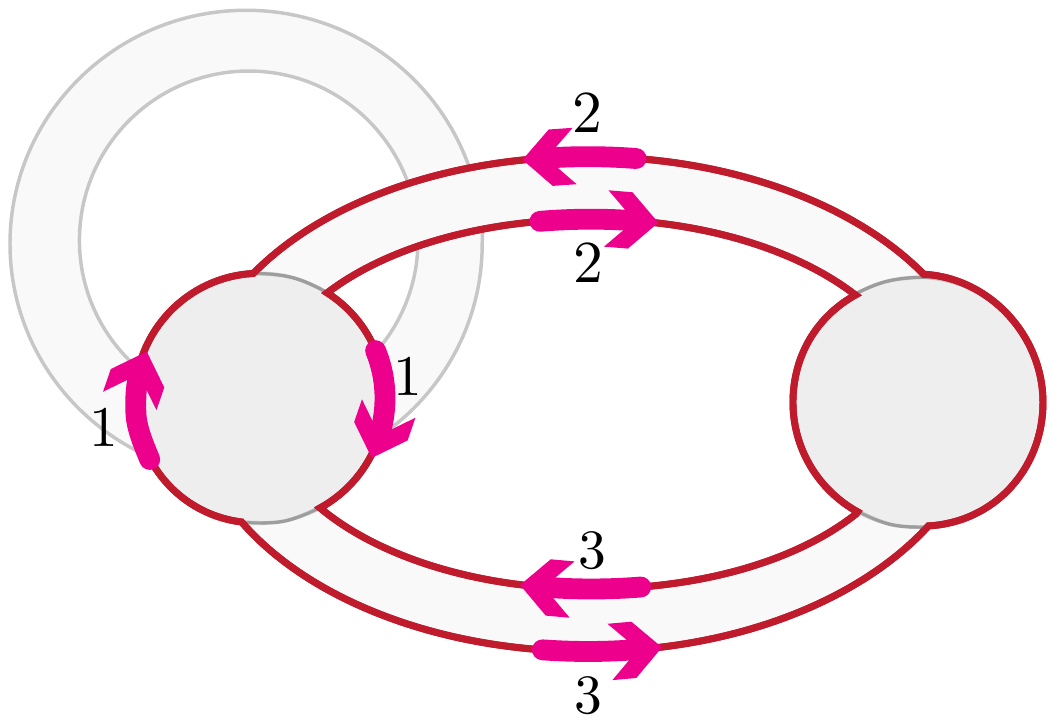}   &\hspace{3cm}&  \raisebox{5mm}{  \includegraphics[width=4cm]{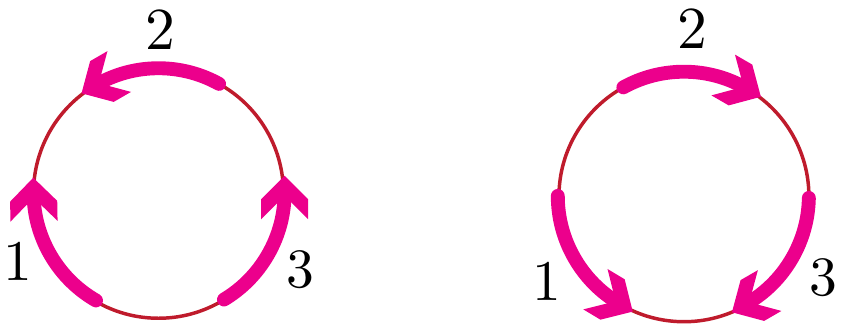}  } \\
 $\BG$ with $\BA=\{2,3\}.$  & &Steps P1  \& P2. & &  Step P3. \\ 
 \end{tabular}
 \end{center}
 \begin{center}
\begin{tabular}{ccc}
  \raisebox{5mm}{  \includegraphics[width=4cm]{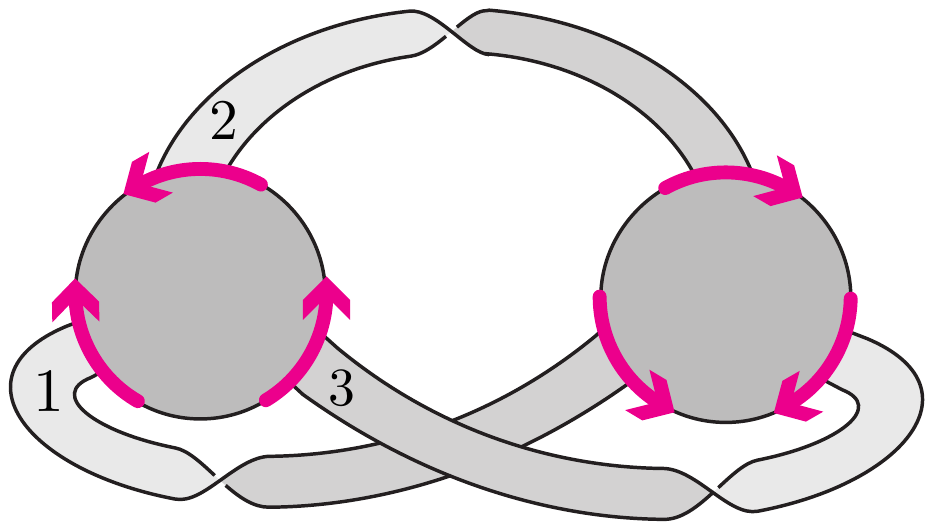}} & \hspace{3cm} & \includegraphics[width=4cm]{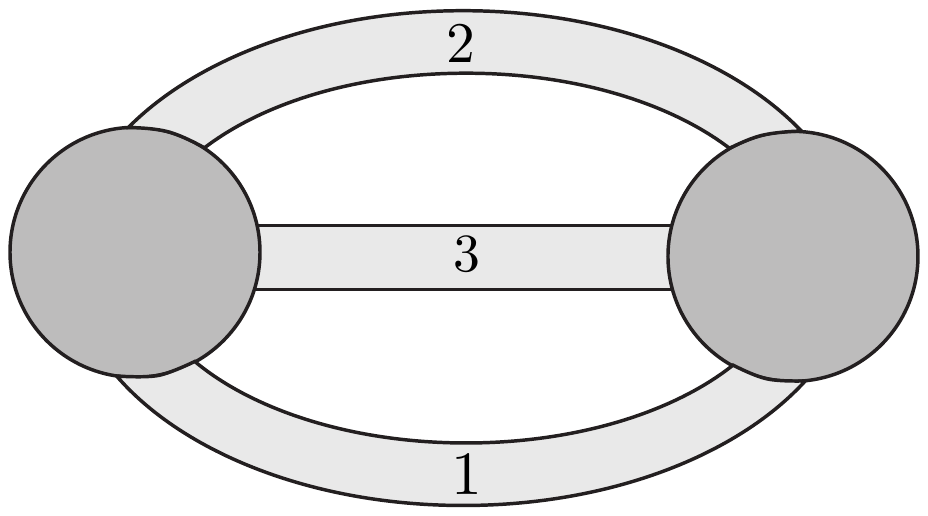}      \\
$\BG^{\BA} $& & Redrawing $\BG^{\BA}$.
\end{tabular}
\end{center}
\end{example}

\begin{example}\label{ex.dex2}
~

 \begin{center}
\begin{tabular}{ccc}
 \includegraphics[width=4cm]{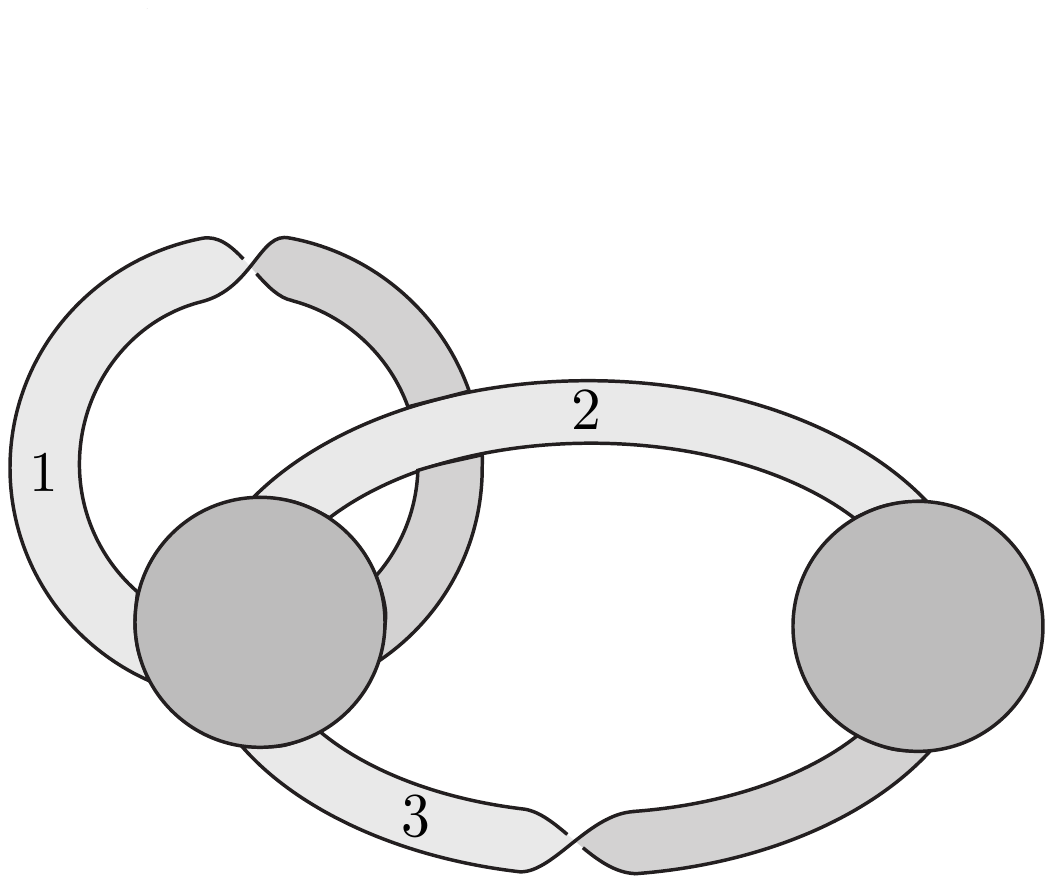}  &\hspace{3cm}& \includegraphics[width=4cm]{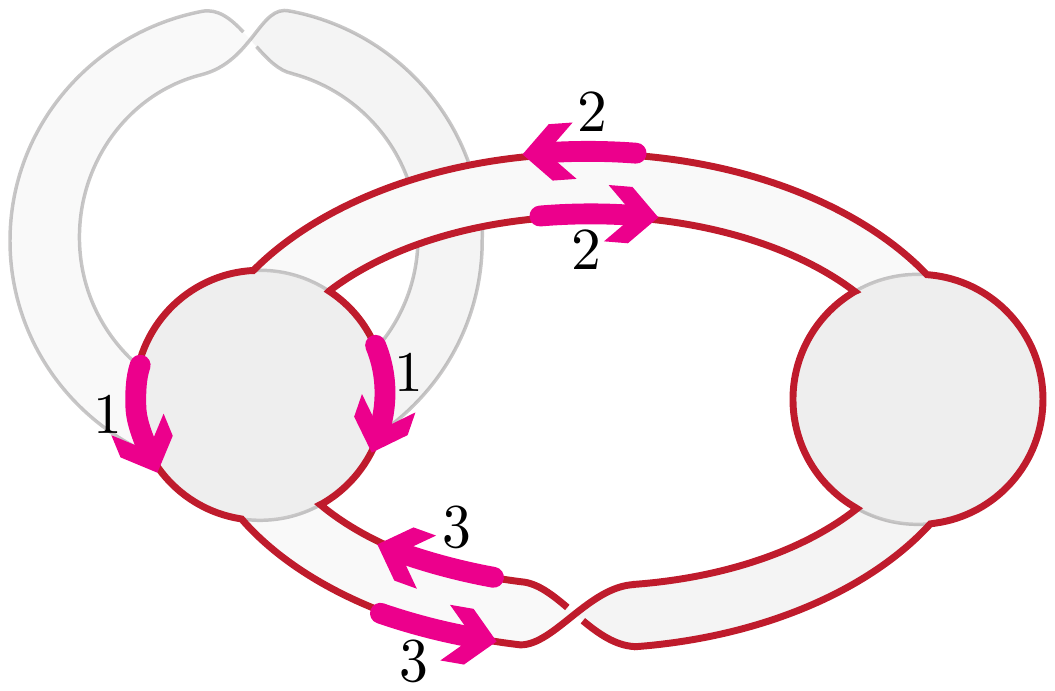}   \\
 $\BG$ with $\BA=\{2,3\}.$  & &Steps P1  \& P2.  \\ 
 \end{tabular}
 \end{center}
 \begin{center}
\begin{tabular}{ccc}
  \raisebox{5mm}{  \includegraphics[width=2cm]{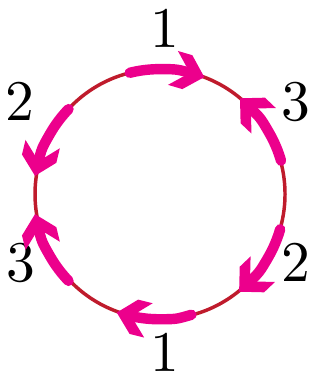}} & \hspace{3cm} & \includegraphics[width=4cm]{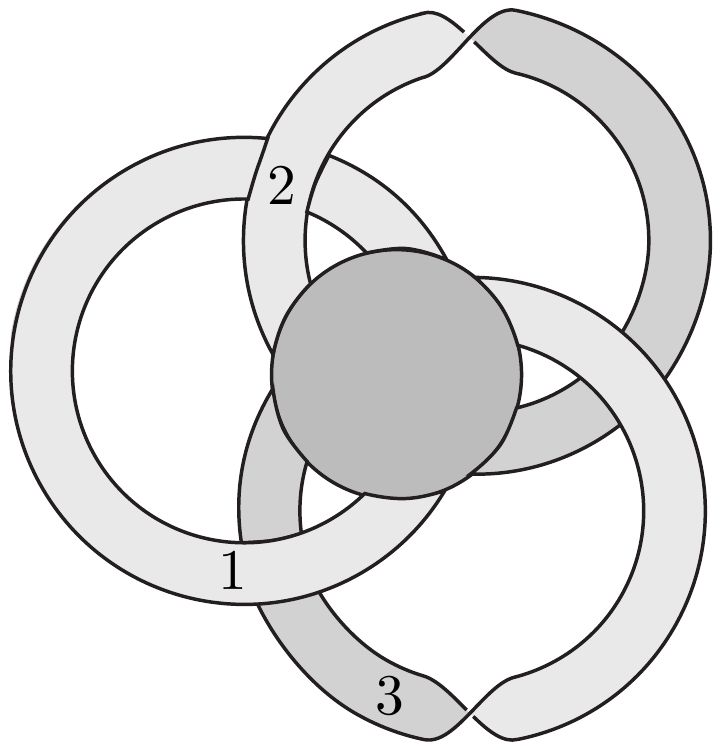}      \\
Step P3. & &  $\BG^{\BA} $.
\end{tabular}
\end{center}
\end{example}

Additional examples of partial duals can be found in \cite{Ch1} and \cite{Mo3}.

Notice that there is a correspondence between the edges of $\BG$ and $\BG^{\BA}$: every edge of $\BG$ gives rise to exactly two marking arrows of the same colour, and one edge of  $\BG^{\BA}$ is attached between these two arrows. We will denote the resulting natural bijection between the edge sets by
\[ \natural: \E(\BG)\rightarrow \E(\BG^{\BA}). \]

\subsection{Natural duality}
Before continuing, we will record a few properties of partial duality. We are particularly interested in the connection between partial and natural duality.

\begin{definition}
Let $\BG=(\V(\BG), \E(\BG))$ be a ribbon graph.  We can regard $\BG$ as a punctured surface. By filling in the punctures using a set of discs denoted $ \V(\BG^*) $, we obtain a surface without boundary  $\Sigma$. 
The {\em natural dual} (or Euler-Poincar\'e dual)  of $\BG$  is the ribbon graph $\BG^* = (\V(\BG^*), \E(\BG)) $. \end{definition}

Note that the complementary sides of the edges are attached to the vertex set  in $\BG$ and $\BG^*$.  

For use later, we highlight the special case when $\BG=(\V(G), \emptyset)$. In this case $\BG^*=\BG$ and duality induces a natural bijection between $\V(\BG)$ and $\V(\BG^*)$.  

 We will often use shorthand notation and write   $\BG^*=\Sigma \backslash\V(\BG)$ when we mean that $\BG^*$ is the dual ribbon graph obtained through the surface $\Sigma$ as above.

The surface $\Sigma$ that arises by filling in the vertices of $\BG$ in the definition of natural duality  will be useful later. To record this concept, we define a {\em dual embedding}    $\{\BG, \BF, \Sigma\}$ of $\BG$ and $\BF$ into a surface $\Sigma$  to be an embedding of $\BG$  in a surface without boundary $\Sigma$  which has  the property that $\BF=\Sigma\backslash \V(\BG)$.

Note that  a dual embedding is independent of the choice of ribbon graph $\BG$ or $\BF$ in the definition.  Also note that ribbon graphs $\BG$ and $\BF$ are natural duals if and only if there exists a dual embedding $\{\BG, \BF, \Sigma\}$.

\bigskip

We can now describe a few properties of partial duality.
 \begin{lemma}\label{l.props}
Let $\BG$ be a ribbon graph, $\BA \subseteq \E(\BG)$ and $\BA^c=\E(\BG)\backslash \BA$.
Then 
\begin{enumerate}
%\item $\BG^{\BA}$ is orientable if and only if $\BG$ is orientable; 
\item  $\BG^{\E(\BG)} = \BG^*$;
%\item  $\BG^{\emptyset} = \BG$;
%\item if $\be \notin \BA$ then   $\left(\BG^{ \BA }\right)^{\{\be\}} =  \BG^{ \BA \cup \{\be\}}$;
\item $\left( \BG^{\BA}\right)^{\natural (\BA)}=\BG$;
\item $  \BG^{\BA}\backslash  \natural (\BA^c) =   \left( \BG\backslash \BA^c\right)^*$.
\end{enumerate}
\end{lemma}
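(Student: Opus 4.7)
The plan is to verify each of the three parts of Lemma~\ref{l.props} directly from the Step P1--P3 definition of partial duality, exploiting the arrow-presentation description of $\BG^\BA$ introduced above. Part (1) is the easiest: setting $\BA = \E(\BG)$ forces every edge into case (b) of Step P1, so the marked paths lie on the two sides of each edge that do not meet a vertex. Step P2 then joins these paths along the boundary of $\BG\backslash\BA^c = \V(\BG)$, which simply traces out the boundary components of $\BG$. Each boundary component becomes a vertex of the resulting ribbon graph, and the two arrows coloured by an edge $\be$ sit on the arcs where $\varphi(\be)$ is reattached. This matches the construction of the natural dual $\BG^* = \Sigma\backslash\V(\BG)$, in which new vertices are obtained by capping off the boundary components of $\BG$.

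For part (2), I would trace the Steps P1--P3 procedure twice: first with respect to $\BA$ to produce $\BG^\BA$, then with respect to $\varphi(\BA)$ inside $\BG^\BA$. The key local observation is that an edge $\be \in \BA$ produces an edge $\varphi(\be) \in \varphi(\BA) \subseteq \E(\BG^\BA)$ that is attached along the two arcs that were the non-vertex sides of $\be$ in $\BG$. Hence, when Step P1 is applied again to $\BG^\BA$ with $\varphi(\BA)$ as the distinguished subset, the newly marked paths for $\varphi(\be)$ lie on the arcs that were the original vertex sides of $\be$ in $\BG$; a symmetric analysis, using case (a) of Step P1 twice, handles $\be \notin \BA$. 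The boundary-joining of Step P2 then retraces the original vertex boundaries, so the resulting arrow presentation is that of $\BG$ itself. The main obstacle in this plan is the orientation bookkeeping, since Step P1 requires choosing orientations for each edge: I need to verify that by choosing compatible orientations in the two iterations, the arrows return to their original positions literally, not merely up to the reversal of arrow pairs that is allowed in the equivalence relation on arrow-marked ribbon graphs.

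Part (3) is then a consequence of the same arrow-presentation viewpoint. The arrow presentation of $\BG^\BA$ produced in Step P3 decomposes naturally according to whether the contributing edge lies in $\BA$ or in $\BA^c$. Deleting the edges $\varphi(\BA^c)$ from $\BG^\BA$ simply removes the pairs of marking arrows coloured by $\BA^c$, together with the boundary arcs along them, leaving the arrow presentation whose arrows lie on the non-vertex sides of the edges of $\BA$, joined along the boundary of $\BG\backslash\BA^c$. This is exactly the arrow presentation produced by applying Steps P1--P3 to $\BG\backslash\BA^c$ with the full edge set $\E(\BG\backslash\BA^c) = \BA$, which by part (1) coincides with $(\BG\backslash\BA^c)^*$.
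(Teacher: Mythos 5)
Your proposal is correct, and on the one part the paper actually proves---item (3)---your argument is essentially the paper's: both reduce to the observation that deleting the edges of $\BA^c$ commutes with forming the partial dual, so that $\BG^{\BA}\backslash\varphi(\BA^c)=(\BG\backslash\BA^c)^{\BA}$, and then invoke item (1) with $\E(\BG\backslash\BA^c)=\BA$. Where you differ is in items (1) and (2): the paper simply cites Chmutov for these, whereas you supply direct verifications from the Step P1--P3 definition. That is a reasonable and more self-contained route, and your local analysis for (2) (an edge of $\BA$ is reattached along its former non-vertex sides, so the second dualization marks the original vertex sides again) is the standard argument. Two small remarks. First, the orientation bookkeeping you flag as the main obstacle in (2) is not actually an obstacle: arrow presentations and arrow-marked ribbon graphs are defined only up to reversing both arrows of a given colour, so you do not need the arrows to return to their original directions literally---agreement up to that equivalence is exactly what equality of the resulting ribbon graphs means. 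Second, in your argument for (1) you write that Step P2 joins the paths along the boundary of $\BG\backslash\BA^c=\V(\BG)$; with $\BA=\E(\BG)$ one has $\BA^c=\emptyset$, so the joining takes place along the boundary of $\BG$ itself. Your conclusion that the closed curves are the boundary components of $\BG$, which are capped off to form the vertices of $\BG^*$, is nevertheless the right one.
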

\begin{proof}
Properties (1) and (2) are from \cite{Ch1}.

If  $\be \in\BA^c$, then the cycles defining the vertices of   $\BG^{\BA}$ follow the vertices incident with $\be$ in $\BG$ (see Figure~\ref{fig.pdd}). It then follows that we can delete the edges in $\BA^c$ before or after forming the partial dual and end up with the same ribbon graph. Thus  $\BG^{\BA}\backslash  \natural (\BA^c)  = \left( \BG\backslash \BA^c \right)^{\BA}$.

It remains to show that $ \left( \BG\backslash \BA^c \right)^{\BA}=  \left( \BG\backslash \BA^c\right)^* $. But this is a consequence of Property (1) of the lemma as $\E(\BG\backslash \BA^c)= \BA$.
\end{proof}

\subsection{Partial dual embeddings}

\begin{lemma}\label{l.pd}
Let $\BG$ be a ribbon graph, $\BA\subset \E (\BG)$ and $\BA^c=\E(\BA)\backslash \BA$. Then the following construction gives $\BG^{\BA}$:
\begin{enumerate}\renewcommand{\theenumi}{Step P\arabic{enumi}${}^{\prime}$.}
\item Present $\BG$ as the arrow-marked ribbon graph $\ar{\BG\backslash\BA^c}$.
\item Take the natural dual of $\BG\backslash\BA^c$. The marking arrows on $\ar{\BG\backslash\BA^c}$ induce marking arrows on $\left(\BG\backslash\BA^c\right)^*$.
\item $\BG^{\BA}$ is the ribbon graph corresponding to the arrow-marked ribbon graph $\ar{(\BG\backslash\BA^c)^*}$.
\end{enumerate}
\end{lemma}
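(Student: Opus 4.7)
The plan is to show that the alternative construction in Steps P1$^{\prime}$--P3$^{\prime}$ produces the same arrow presentation as the original construction in Steps P1--P3. Since a ribbon graph is determined by its arrow presentation, identifying the two presentations identifies the ribbon graph obtained in Step P3$^{\prime}$ with $\BG^{\BA}$.

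First I would compare the cycles underlying the two arrow presentations. The closed curves produced in Step P2 lie on the boundary of the surface $\BG\backslash\BA^c$, and a direct trace through Step P1 shows that together they form the full collection of boundary cycles of $\BG\backslash\BA^c$. In the alternative construction, converting the arrow-marked ribbon graph $\ar{(\BG\backslash\BA^c)^*}$ into an arrow presentation produces cycles which are the vertex boundaries of $(\BG\backslash\BA^c)^*$; by definition of natural duality these are obtained by filling in the boundary components of $\BG\backslash\BA^c$ with discs, so the two sets of cycles coincide as curves in the common surface~$\Sigma$.

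Next I would match the coloured marking arrows on these cycles. For $\be\in\BA^c$, the arrows in Step P1(a) lie exactly where $\be$ met its incident vertices in $\BG$, and this is precisely where the marking arrows of $\ar{\BG\backslash\BA^c}$ sit in Step P1$^{\prime}$ (by the recipe for converting a ribbon graph to an arrow-marked spanning sub-ribbon graph). For $\be\in\BA$, the arrows in Step P1(b) lie on the two outer sides of $\be$, i.e.\ the sides not meeting vertices of $\BG$; in the alternative construction $\be$ is still an edge of $(\BG\backslash\BA^c)^*$, and when that ribbon graph is rewritten as an arrow presentation it contributes two arrows where $\be$ attaches to the dual vertices -- but natural duality attaches these dual vertices along exactly the previously outer sides of $\be$, so the arrow positions coincide. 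In both constructions the arrow orientations come from an arbitrary boundary orientation of $\be$, and since arrow presentations are considered up to simultaneous reversal of both arrows of a common colour, the choice is immaterial.

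The main obstacle I expect is bookkeeping for twisted edges $\be\in\BA$, whose outer sides spiral around $\be$; I would have to check in a local picture, as in Figure~\ref{fig.pdd}, that this spiralling matches the twisting of $(\BG\backslash\BA^c)^*$ near $\be$ so that the two arrow orientations end up consistent at both ends of $\be$. A useful sanity check is provided by Lemma~\ref{l.props}(3): it tells us that deleting $\varphi(\BA^c)$ from $\BG^{\BA}$ recovers $(\BG\backslash\BA^c)^*$, which is exactly the spanning sub-ribbon graph that Step P3$^{\prime}$ decorates with marking arrows, so the underlying topologies produced by the two constructions are forced to agree and only the position and orientation of the $\BA^c$-marking arrows need to be verified.
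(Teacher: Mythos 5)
Your proposal is correct, but it is organized quite differently from the paper's proof, which is a short reduction rather than a ground-up verification. The paper argues in two moves: first, since the cycles defining the vertices of $\BG^{\BA}$ and of $(\BG\backslash\BA^c)^{\BA}$ coincide (the observation already made in proving Lemma~\ref{l.props}(3)), one may present $\BG$ as $\ar{\BG\backslash\BA^c}$, form the partial dual of $\BG\backslash\BA^c$ along $\BA$ while retaining the marking arrows, and read off $\BG^{\BA}$; second, because $\BA$ is the \emph{entire} edge set of $\BG\backslash\BA^c$, Lemma~\ref{l.props}(1) gives $(\BG\backslash\BA^c)^{\BA}=(\BG\backslash\BA^c)^*$, which turns that intermediate step into the natural dual of Step~P2$^{\prime}$. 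You instead unpack both constructions to the level of arrow presentations and match them cycle-by-cycle and arrow-by-arrow: the P2 curves are the boundary components of $\BG\backslash\BA^c$, hence the vertex boundaries of $(\BG\backslash\BA^c)^*$; the $\BA^c$-arrows sit on the vertex arcs in both constructions; and the $\BA$-arrows of Step~P1(b) sit on the free sides of each edge, which is exactly where natural duality reattaches that edge. What the paper's route buys is brevity, by reusing facts already proved (ultimately Chmutov's $\BG^{\E(\BG)}=\BG^*$); what yours buys is self-containedness and an explicit picture of why the two arrow presentations agree. One remark on the ``obstacle'' you flag: the twisted-edge case needs no separate local analysis, because for each $\be\in\BA$ both constructions place the two $\be$-coloured arrows on the \emph{same} two arcs of the \emph{same} disc $\be$, oriented by a boundary orientation of that disc; the only ambiguity is the global choice of that orientation, which is absorbed by the equivalence of arrow presentations under reversing both arrows of a given colour, exactly as you note for the untwisted case.
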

Before proving the lemma, we  provide an example of the construction.
\begin{example}\label{ex.dex3}
~

Reconsidering Example~\ref{ex.dex1} and carrying out partial duality using the recipe in the lemma gives:
 \begin{center}
\begin{tabular}{ccc}
 \includegraphics[width=4cm]{dex1}  & \hspace{5mm}\raisebox{10mm}{=}\hspace{5mm} & \includegraphics[width=4cm]{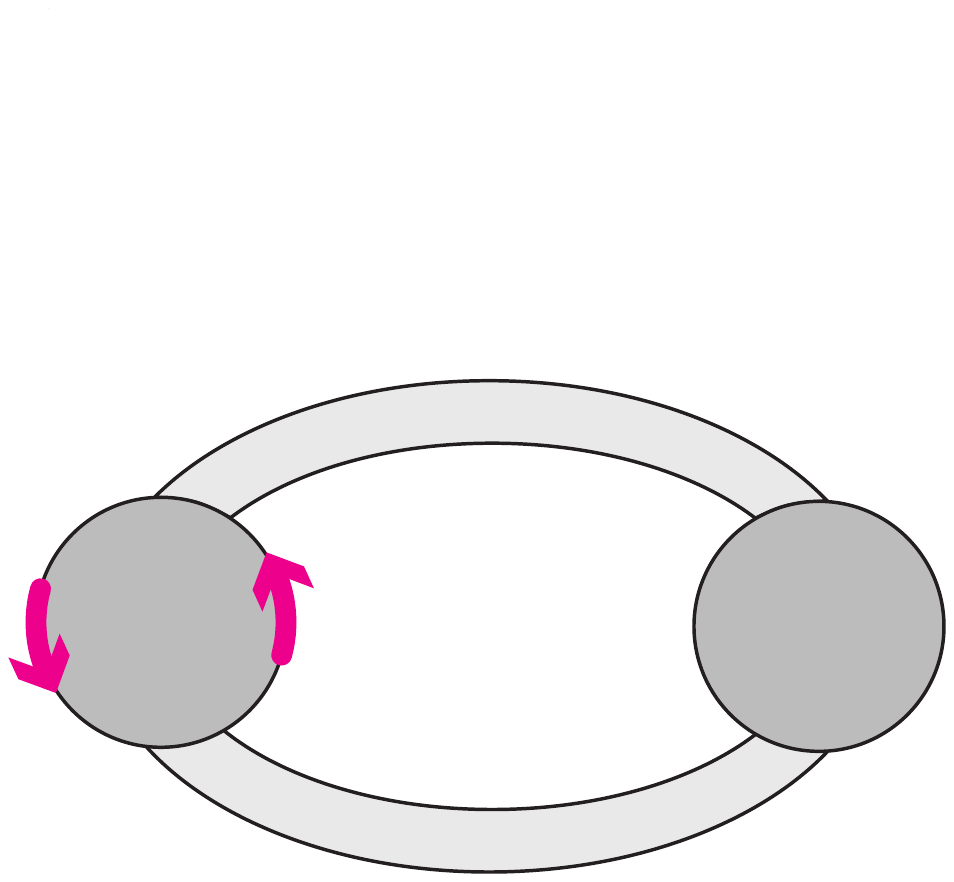}   \\ & & \\
 $\BG$ with $\BA=\{2,3\}.$  & &Step P1${}^\p$.  \\ 
 \end{tabular}
 \end{center}
 \begin{center}
\begin{tabular}{ccc}
  \includegraphics[height=4cm]{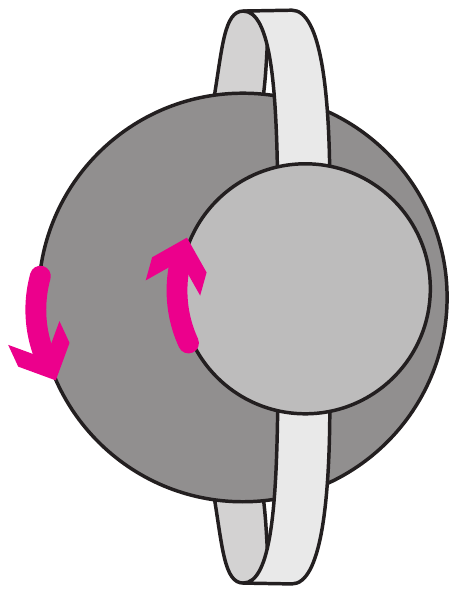}&\hspace{5mm}\raisebox{20mm}{=}\hspace{5mm}& \includegraphics[height=4cm]{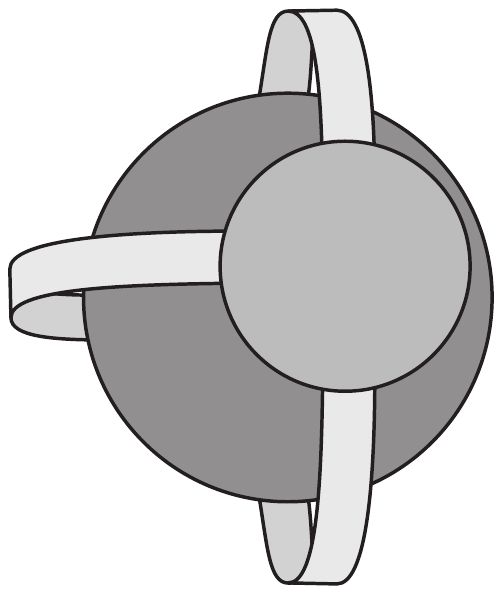}    \hspace{5mm}\raisebox{20mm}{=}\hspace{5mm} \raisebox{6mm}{\includegraphics[height=2.8cm]{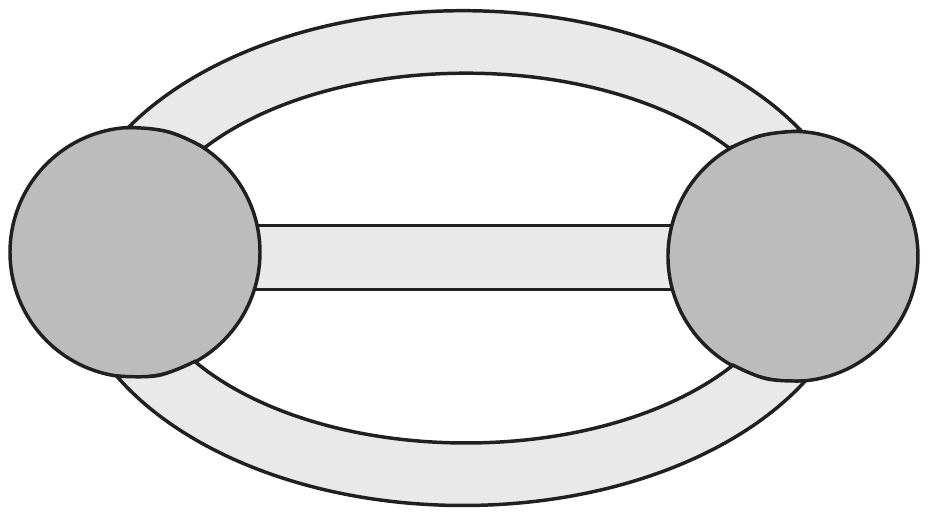}  }    \\ & & \\
Step P2${}^\p$. &&  $\BG^{\BA} $.
\end{tabular}
\end{center}
\end{example}

We now give a proof of Lemma~\ref{l.pd}.
\begin{proof}
Since $\BG^{\BA}$ and $(\BG\backslash \BA^c)^{\BA}$ have the same vertex set, the formation of $\BG^{\BA}$ from $\BG$ admits the following description: present $\BG$ as an arrow-marked ribbon graph $\ar{\BG\backslash \BA^c}$; form $\ar{(\BG\backslash \BA^c)^{\BA}}$, retaining the marking arrows from the last step; this arrow-marked ribbon graph describes $\BG^{\BA}$. 

Recalling from  Lemma~\ref{l.props}  that $  \BG^{\BA}\backslash  \natural (\BA^c) =   \left( \BG\backslash \BA^c\right)^*$ and rewriting the second step of the above description using this fact gives the description of partial duality in the lemma.
\end{proof}

\bigskip

A key observation used in this paper is that by using the description of partial duality from Lemma~\ref{l.pd}, $\BG$ and $\BG^{\BA}$ can be described as a pair of naturally dual arrow-marked ribbon graphs.

\begin{definition}
A set  $\{\BG, \BF, \Sigma, \M\}$ is  a {\em partial dual embedding} of ribbon graphs $\BG$ and $\BF$ if
\begin{enumerate}\renewcommand{\labelenumi}{(\roman{enumi})}
\item $\{\BG, \BF, \Sigma\}$ is a dual embedding;
\item $\M$ is a set of disjoint coloured arrows marked on the boundaries of the embedded vertices in $\V(\BG) \cap \V(\BF) \subset \Sigma$ with the property that there are exactly two arrows of each colour.
\end{enumerate}
\end{definition}

\begin{theorem}\label{t.pdemrib}
Let $\BG$ and $\BF$ be ribbon graphs. Then $\BG$ and $\BF$ are partial duals if and only if there exists a partial dual embedding     $\{\widetilde{\BG}, \widetilde{\BF}, \Sigma, \M\}$ with the property that $\Sigma \backslash \V(\widetilde{\BF})\cup\M$  is an arrow-marked ribbon graph describing $\BG$, and 
$\Sigma \backslash \V(\widetilde{\BG})\cup\M$  is an arrow-marked ribbon graph describing $\BF$.
\end{theorem}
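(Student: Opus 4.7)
The plan is to reduce both directions of the theorem to Lemma~\ref{l.pd}, which already realizes partial duality as a natural duality of a spanning sub-ribbon graph equipped with marking arrows that record the deleted edges. The content of the theorem, then, is essentially that this description can be packaged as a pair of naturally dual ribbon graphs sitting inside the same surface $\Sigma$ with one common set $\M$ of marking arrows.

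For the forward direction, suppose $\BF=\BG^{\BA}$. I would set $\widetilde{\BG}:=\BG\backslash\BA^c$ and $\widetilde{\BF}:=(\BG\backslash\BA^c)^*$; by Lemma~\ref{l.props}(3) the latter equals $\BG^{\BA}\backslash\varphi(\BA^c)$, so $\widetilde{\BG}$ and $\widetilde{\BF}$ are natural duals and admit a dual embedding $\{\widetilde{\BG},\widetilde{\BF},\Sigma\}$. The presentation $\ar{\BG\backslash\BA^c}$ places coloured marking arrows on arcs of the vertex-boundaries of $\widetilde{\BG}$ that carry no edges of $\widetilde{\BG}$; these are precisely the arcs that get shared with the dual vertex discs $\V(\widetilde{\BF})$ when the punctures of $\widetilde{\BG}$ are filled in. Taking $\M$ to be this set of arrows therefore yields a legitimate partial dual embedding. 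Since $\Sigma\backslash\V(\widetilde{\BF})=\widetilde{\BG}$ and $\Sigma\backslash\V(\widetilde{\BG})=\widetilde{\BF}$, the arrow-marked ribbon graphs $\widetilde{\BG}\cup\M$ and $\widetilde{\BF}\cup\M$ coincide with $\ar{\BG\backslash\BA^c}$ and $\ar{(\BG\backslash\BA^c)^*}$ respectively, and Lemma~\ref{l.pd} says these describe $\BG$ and $\BG^{\BA}=\BF$.

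For the backward direction, suppose a partial dual embedding $\{\widetilde{\BG},\widetilde{\BF},\Sigma,\M\}$ with the stated descriptive property is given. I would let $\BA\subseteq\E(\BG)$ be the set of edges of $\BG$ that, under the arrow-marked presentation $(\Sigma\backslash\V(\widetilde{\BF}))\cup\M$ of $\BG$, are added via the pairs of arrows in $\M$; then $\widetilde{\BG}=\BG\backslash\BA^c$ and, since $\{\widetilde{\BG},\widetilde{\BF},\Sigma\}$ is a dual embedding, $\widetilde{\BF}=(\BG\backslash\BA^c)^*$. Lemma~\ref{l.pd} now identifies $\BG^{\BA}$ as the ribbon graph described by $\ar{(\BG\backslash\BA^c)^*}$, with marking arrows obtained by transferring those of $\ar{\BG\backslash\BA^c}$ across the shared boundary. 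Because the arrows of $\M$ already lie on $\V(\widetilde{\BG})\cap\V(\widetilde{\BF})$, this transfer is the identity, and the arrow-marked ribbon graph producing $\BG^{\BA}$ is precisely $(\Sigma\backslash\V(\widetilde{\BG}))\cup\M$, which by hypothesis describes $\BF$. Hence $\BF=\BG^{\BA}$.

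The only subtle point is the identification in the forward direction: one must check that the arcs on which the arrows of $\ar{\BG\backslash\BA^c}$ naturally sit are exactly the arcs shared with the dual vertex discs once $\Sigma$ is formed, so that they become an admissible marking set $\M$ on $\V(\widetilde{\BG})\cap\V(\widetilde{\BF})$. This is where the work lives, but it is essentially immediate from the punctured-surface definition of the natural dual: the boundary arcs of the vertex discs of $\widetilde{\BG}$ not occupied by edges are precisely the boundary of the punctures, and gluing in $\V(\widetilde{\BF})$ turns them into $\V(\widetilde{\BG})\cap\V(\widetilde{\BF})$. Once this identification is granted, the rest of the argument is a direct translation of Lemma~\ref{l.pd} into the partial-dual-embedding language.
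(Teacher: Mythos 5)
Your proof is correct and follows essentially the same route as the paper: both directions are reduced to Lemma~\ref{l.pd}, taking $\widetilde{\BG}=\BG\backslash\BA^c$, $\widetilde{\BF}=(\BG\backslash\BA^c)^*$ and $\M$ the induced arrows in the forward direction, and reading $\BA$ off the embedding in the converse. The only blemish is a complementation slip in the converse: the edges of $\BG$ added via the arrow pairs of $\M$ form $\BA^c$, not $\BA$ (equivalently, $\BA$ is the set of edges of $\BG$ retained in $\widetilde{\BG}$), which is how you in fact use the notation in the rest of that paragraph.
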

\begin{proof}
First suppose that $\BG$ and $\BF$ are partial duals. Then there exists a set of edges $\BA\subseteq \E(\BG)$ such that $\BG^{
\BA}=\BF$. Then $\BG$ can described as an arrow-marked ribbon graph $\ar{\BG\backslash \BA^c}$, where $\BA^c=\E(\BA)\backslash \BA$. Let $\Sigma$ be the surface obtained from $\BG\backslash \BA^c$ by filling in the punctures. Then 
$\{ \BG\backslash \BA^c, (\BG\backslash \BA^c)^*, \Sigma\}$ forms a natural dual embedding. The arrow markings on $\ar{\BG\backslash \BA^c}$ induce a set of coloured  arrows on $\V( \BG\backslash \BA^c) \cap \V(( \BG\backslash \BA^c)^*)$ with the property that there are exactly two arrows of each colour. Denote this induced set of coloured arrows by $\M$. Then 
\[ \{  \BG\backslash \BA^c, (\BG\backslash \BA^c)^*, \Sigma, \M  \} \]
is a partial dual embedding.
Moreover, $\Sigma \backslash \V( (\BG\backslash \BA^c)^*)$ describes $\BG$ by construction, and 
$\Sigma \backslash \V( (\BG\backslash \BA^c))$ clearly  describes $\BG^{\BA}=\BF$ if we use the construction of partial duality from Lemma~\ref{l.pd}.

\smallskip

Conversely, suppose that  $\{\widetilde{\BG}, \widetilde{\BF}, \Sigma, \M\}$ is a partial dual embedding with the property that $\Sigma \backslash \V(\widetilde{\BF})\cup\M$  is an arrow-marked ribbon graph describing $\BG$, and 
$\Sigma \backslash \V(\widetilde{\BG})\cup\M$  is an arrow-marked ribbon graph describing $\BF$. Then $\widetilde{\BG}$ and $ \widetilde{\BF}$ are precisely the naturally dual marked ribbon graphs described in Step~P2${}^{\prime}$ of the construction of the partial dual. Here $\BA$ is the set of edges of $\BG$ that are also in $\widetilde{\BG}$.

\end{proof}

The question of how graph theoretical properties of  partial duals $\BG^{\BA}$ relate to the original ribbon graph $\BG$ is of interest. As an application of Theorem~\ref{t.pdemrib}, we relate some graph theoretical properties of $\BG$ and $\BG^{\BA}$. For the application, we let $v(\BG):= |\V(\BG)|$, $e(\BG):=|\E(\BG)|$, $k(\BG)$ denote the number of connected components of $\BG$,   $  p(\BG)$ denote the number of boundary components of $\BG$, and, $g(\BG)$ denote the genus of $\BG$ when $\BG$ is orientable.
\begin{corollary}
Let $\BG$ be a ribbon graph and $\BA\subseteq \E(\BG)$.  Then 
\begin{enumerate}
\item $v(\BG^{\BA})=p(\BG \backslash\BA^c)$, where $\BA^c= \E(\BG) \backslash\BA$;
\item $p(\BG^{\BA}) = p(\BG \backslash\BA)$;
\item when  $\BG$ is orientable, $g(\BG^{\BA})= \frac{1}{2}\left(  2k(\BG)+e(\BG)- p(\BG \backslash\BA^c)- p(\BG \backslash\BA) \right)$.
\end{enumerate}
\end{corollary}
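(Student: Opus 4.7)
The proof rests on the partial dual embedding from Theorem~\ref{t.pdemrib}, which realises $\BG^{\BA}$ as the arrow-marked ribbon graph formed by placing the marks $\M$ on the vertices of $(\BG\backslash\BA^c)^*$. Since adding edges to an arrow-marked ribbon graph via marking arrows does not alter its vertex set, one has $\V(\BG^{\BA}) = \V((\BG\backslash\BA^c)^*)$. By the definition of natural duality, the vertices of $(\BG\backslash\BA^c)^*$ are precisely the discs that fill the punctures of $\BG\backslash\BA^c$, so their number is $p(\BG\backslash\BA^c)$. This proves Item~(1).

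For Item~(2), I would deduce it from Item~(1) via the identity
\[(\BG^{\BA})^* = \BG^{\BA^c}.\]
This follows from the iterative rule $(\BG^{\BA})^{\varphi(\BB)} = \BG^{\BA\triangle\BB}$, a standard property of partial duality (derivable from Lemma~\ref{l.props}(2), which makes partial duality at each edge an involution), combined with Lemma~\ref{l.props}(1) applied to $\BG^{\BA}$, taking $\BB = \E(\BG)$. Using the basic fact that for any ribbon graph the number of boundary components equals the number of vertices of its natural dual, one then computes
\[p(\BG^{\BA}) = v((\BG^{\BA})^*) = v(\BG^{\BA^c}) = p(\BG\backslash(\BA^c)^c) = p(\BG\backslash\BA),\]
where the last equality is Item~(1) applied to $\BG^{\BA^c}$.

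For Item~(3), I would invoke Euler's formula for an orientable ribbon graph $\BF$, namely $v(\BF) - e(\BF) + p(\BF) = 2(k(\BF) - g(\BF))$, with $g(\BF)$ summed over components. Applied to $\BG^{\BA}$ and solved for the genus, this yields
\[g(\BG^{\BA}) = \tfrac{1}{2}\!\left(2k(\BG^{\BA}) - v(\BG^{\BA}) + e(\BG^{\BA}) - p(\BG^{\BA})\right).\]
Substituting $e(\BG^{\BA}) = e(\BG)$ (from the edge bijection $\varphi$), $v(\BG^{\BA}) = p(\BG\backslash\BA^c)$ from Item~(1), $p(\BG^{\BA}) = p(\BG\backslash\BA)$ from Item~(2), together with $k(\BG^{\BA}) = k(\BG)$, then gives the stated formula after simplification.

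The main obstacle is justifying the two auxiliary facts needed for Item~(3): that $\BG^{\BA}$ is orientable whenever $\BG$ is, and that partial duality preserves the number of connected components. Orientability is preserved because the construction in Lemma~\ref{l.pd} never leaves the class of orientable ribbon graphs (the natural dual of an orientable ribbon graph is orientable, and attaching edges via compatibly oriented marking arrows keeps the result orientable). Preservation of components reduces, via the iterative rule above, to the single-edge case, which can be verified directly from the construction according to whether the edge is a loop, a bridge, or neither.
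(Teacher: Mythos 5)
Your items (1) and (3) follow the paper's own proof essentially verbatim: (1) reads off $v(\BG^{\BA})=v((\BG\backslash\BA^c)^*)=p(\BG\backslash\BA^c)$ from the partial dual embedding of Lemma~\ref{l.pd}/Theorem~\ref{t.pdemrib}, and (3) is the same Euler-formula computation using $e(\BG^{\BA})=e(\BG)$, $k(\BG^{\BA})=k(\BG)$ and preservation of orientability (the paper simply cites \cite{Ch1} for these last facts rather than re-proving them as you sketch). Item (2) is where you genuinely diverge. The paper argues locally: at each edge $\be$ it labels four boundary points $a,b,c,d$ and checks, in the two cases $\be\notin\BA$ and $\be\in\BA$, that the boundary arcs of $\BG^{\BA}$ and of $\BG\backslash\BA$ join these points identically, so the boundary components agree globally. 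You instead derive (2) algebraically from (1) via $p(\BG^{\BA})=v((\BG^{\BA})^*)=v(\BG^{\BA^c})=p(\BG\backslash\BA)$. This is slicker and exhibits (2) as a formal consequence of (1), but it rests on the identity $(\BG^{\BA})^{\varphi(\BB)}=\BG^{\BA\triangle\BB}$, and your parenthetical claim that this is ``derivable from Lemma~\ref{l.props}(2)'' is too quick: the involution $(\BG^{\BA})^{\varphi(\BA)}=\BG$ is only the case $\BB=\BA$, and the general symmetric-difference rule additionally needs the fact that partial duality can be performed one edge at a time in any order. That fact is standard and is proved in \cite{Ch1}, so your route is sound once you cite it properly; the paper's local argument has the advantage of being self-contained and of not importing any machinery beyond the definition.
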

\begin{proof}
For the first identity, observe that since $\BG$ and $\BG^{\BA}$ are partial duals, there exists a partial dual embedding $ \{  \BG\backslash \BA^c, (\BG\backslash \BA^c)^*, \Sigma, \M  \}$ where $\BG$ and $\BG^{\BA}$  can be obtained from the partial dual embedding in the way described in Theorem~\ref{t.pdemrib}. Since process of obtaining ribbon graphs from partial dual embeddings does not create or destroy vertices, we have
\[   v(\BG^{\BA}) = v( (\BG\backslash \BA^c)^* ) = f(\BG\backslash \BA^c) =f(\BG \backslash\BA^c)  .\] 

For the second identity, consider the ribbon graph $\BG$ locally at an edge $\be$ as shown in the left hand figure below.
\begin{center}
\begin{tabular}{ccccc}
 \includegraphics[width=3cm]{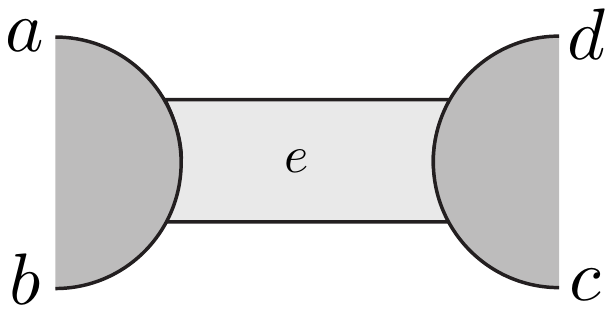} & \hspace{1cm} & \includegraphics[width=3cm]{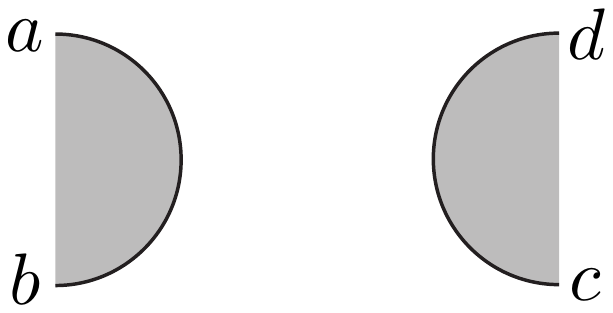} & \hspace{1cm} &   \includegraphics[width=3cm]{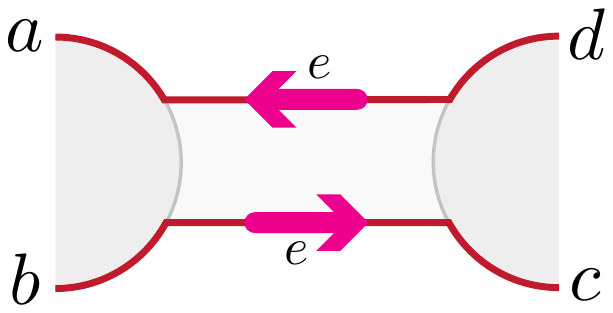}   \\
 An edge $\be$ of $\BG$ $\be$. &  &$\BG\backslash \be$. & & Dual at $\be$. 
 \end{tabular}
\end{center}

 Let $a$, $b$, $c$ and $d$ be the points on the boundary components of $\BG$ as shown in the figure.
 To prove the result, it is enough to show that the arcs on the boundary components of $\BG^{\BA}$ and of $\BG \backslash\BA$ in the neighbourhood shown in the figure  connect $a$, $b$, $c$ and $d$  in the same way. There are two cases: when $\be \notin \BA$  and when $\be \in \BA$. If $\be \notin \BA$, then  $\BG^{\BA}$ and $\BG \backslash\BA$ are identical in a neighbourhood of $\be$ and the boundary components contain arcs $(a,d)$ and $(b,c)$ in both ribbon graphs. If $\be \in \BA$, then in  $\BG\backslash \be$ the boundary components contain arcs $(a,b)$ and $(c,d)$ (see the middle figure above), and in  $\BG^{\BA}$ the boundary components also contain arcs $(a,b)$ and $(c,d)$ (see the right hand figure above).

For the third identity, first note that by \cite{Ch1}, $\BG$ is orientable if and only if  $\BG^{\BA}$ is orientable. The result then follows from Euler's formula (which gives
$2g(\BG^{\BA})= 2k(\BG^{\BA})+e(\BG^{\BA})-v(\BG^{\BA})+p(\BG^{\BA})$), the first two identities of the theorem, and the facts that $e(\BG)=e(\BG^{\BA}) $ and $k(\BG)=k(\BG^{\BA}) $ from \cite{Ch1}.

\end{proof}

\section{Partial duality for graphs}\label{s.pdg}

We will always denote ribbon graphs, their edges and their vertices using a bold font and reserve the non-bold font for graphs. Just as with ribbon graphs, if $G$ is a graph we let $\E(G)$ denote its edge set  and $\V(G)$ denote its vertex set. 

\subsection{Natural duality}\label{ss.nd}

If $\BG=(\V(\BG),\E(\BG))$ is a ribbon graph then we can construct a graph $G=(\V(G),\E(G))$ from $\BG$ by replacing each edge of $\BG$ with a line, and then contracting the vertices of $\BG$ into points. Such a graph $G$ is called the {\em core} of $\BG$. 

Notice that there is a natural correspondence  between the edges  of a ribbon graph and its core, and the vertices of a ribbon graph and its core.

Recall that 
two graphs  are said are said to be (Euler-Poincar\'e) {\em dual graphs} if and only if the graphs  are the cores of naturally dual ribbon graphs. As usual, the dual of $G$ is denoted by $G^*$. Duality acts disjointly on connected components.

There is also a  canonical embedding of the core $G$ of a ribbon graph $\BG$ into $\BG$: place each vertex of $G$ inside the corresponding vertex of $\BG$; and place each edge of $G$ along the corresponding edge of $\BG$. 
Also, if $G$ is a graph embedded in a surface, then we can form a ribbon graph $\BG$ by taking  a small neighbourhood  of $G$.  $G$ is then the core of $\BG$.

\bigskip

We say that $\{ G, G^*, \Sigma\}$ is a {\em dual embedding of graphs} if there are neighbourhoods of $G$ and $G^*$ defining ribbon graphs $\BG$ and $\BG^*$ such that $\{\BG,\BG^*,\Sigma\}$ is a dual embedding of ribbon graphs.

A little care needs to be taken with vertices that meet no edges, call these {\em isolated vertices}. The dual of an isolated vertex is an isolated vertex, and a dual embedding of an isolated vertex and its dual consists of an embedding of these two vertices in a sphere. In particular this means that if $G$ contains $k$ isolated vertices $v_1, \ldots , v_k$, then the dual embedding  $\{ G, G^*, \Sigma\}$ will contain $k$ spherical components $S^2_1, \ldots , S^2_k$, such that $S^2_i$ contains the vertex $v_i$ and exactly one isolated vertex of $G^*$, for $i=1, \ldots k$. Notice that this means that a dual embedding induces  a pairing of the isolated vertices of $G$ and $G^*$.

Dual graphs can be characterized in terms of a bijection between edge sets. To describe this characterization we need to introduce some terminology.

Suppose that $G$ and $H$ are graphs and $\varphi:\E(G)\rightarrow \E(H)$ is a bijection between their edge sets. Let $v\in \V(G)$  and $S_v$ be  the set of edges that are incident with $v$. Then the set   $\varphi(S_v)$  of edges in $H$ together with the vertices that are incident with $\varphi(S_v)$ form a subgraph of $H$. This subgraph is denoted $H_v$. 

\begin{definition}\label{d.ed}
Let $G$ and $H$ be graphs and $\varphi:\E(G)\rightarrow \E(H)$ be  a bijection. We say that $\varphi$ satisfies {\em Edmonds' Criteria} if 
\begin{enumerate}\renewcommand{\labelenumi}{(\roman{enumi})}
\item  edges $e,f\in \E(G)$ belong to the same connected component if and only if $\varphi(e), \varphi(f)\in \E(H)$ belong to the same connected component;
\item for each $v\in \V(G)$, $H_v$ is connected and has an even number of edge-ends to each of its vertices (where if an edge meets $v$ at both ends its image in $H_v$ is counted twice);
\item for each $v\in \V(H)$, $G_v$ is connected and has an even number of edge-ends to each of its vertices (where if an edge meets $v$ at both ends its image in $G_v$ is counted twice).
\end{enumerate}
\end{definition}

The following lemma is easily verified. It is essentially one implication of Edmonds' Theorem (a formal proof of the lemma can  easily be deduced from  \cite{Ed}).
\begin{lemma}\label{l.ed}
 Let $\{G,H,\Sigma\}$ be a dual embedding of the graphs $G$ and $H$. Define a bijection $\varphi: \E(G)\rightarrow \E(H)$ by setting $\varphi(e)$ to be the unique edge of $H$ that $e$ intersects.  Then $\varphi$ satisfies Edmond's Criteria.
\end{lemma}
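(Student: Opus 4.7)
The plan is to lift the dual graph embedding to the ribbon graph dual embedding it comes from, and then read off the three parts of Edmonds' Criteria from the geometry of a small neighbourhood of a single vertex disc.

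By definition of a dual embedding of graphs, there exist neighbourhoods of $G$ and $H$ in $\Sigma$ that realize ribbon graphs $\BG$ and $\BH$ with $\{\BG,\BH,\Sigma\}$ a dual embedding of ribbon graphs, and the edge bijection $\varphi$ between $G$ and $H$ agrees with the natural edge bijection $\E(\BG)\to\E(\BH)$ coming from the fact that $\BH = \Sigma\setminus \V(\BG)$. Each edge $e$ of $G$ is embedded in the unique edge disc of $\BG$ corresponding to it, and that disc meets precisely one edge disc of $\BH$, namely the one carrying $\varphi(e)$. Criterion (i) of Edmonds' Criteria is then immediate: a connected component of $\Sigma$ contains exactly one connected component of $\BG$ and exactly one connected component of $\BH$ (handling isolated vertices via the auxiliary spherical components described in the text), so two edges lie in the same component of $G$ if and only if they lie in the same component of $\Sigma$ if and only if their $\varphi$-images lie in the same component of $H$.

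For criterion (ii), fix $v\in \V(G)$ and let $\bv$ be the corresponding vertex disc of $\BG$. I would trace the boundary circle $\partial \bv\subset \Sigma$ once. This circle alternates between (a) arcs contained in edges of $\BG$ incident to $\bv$ (each such arc corresponds to a half-edge at $v$) and (b) arcs lying in faces of $\BG$, i.e.\ inside vertex discs of $\BH$. The edges of $\BG$ crossed are exactly the edges incident to $v$ in $G$, and each face arc sits inside some vertex disc $\bu$ of $\BH$ and joins two consecutive edge crossings; those two crossings correspond to two edges of $H$ that are both incident to $\bu$. Hence $\partial \bv$, viewed through $\varphi$, is a single closed walk in $H_v$ that uses every edge of $H_v$ and visits every vertex of $H_v$. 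This immediately gives connectedness of $H_v$. For evenness of degree, observe that at each vertex $\bu$ of $H_v$ the contribution to the degree is twice the number of arcs that $\partial \bv$ has inside $\bu$, since each such arc contributes the two edge crossings at its endpoints. Therefore every vertex of $H_v$ has even degree, and $H_v$ is Eulerian.

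Criterion (iii) follows from (ii) by swapping roles: the definition of dual embedding for graphs is symmetric in $G$ and $H$ (as stressed in the text for the ribbon graph case), and $\varphi^{-1}$ is the corresponding bijection in the other direction, so the same argument applied to the vertex discs of $\BH$ yields that $G_v$ is Eulerian for every $v\in\V(H)$. The main obstacle is the careful bookkeeping in step (ii) above, specifically checking that the alternating structure on $\partial \bv$ indeed gives a single closed walk through all of $H_v$ and that every vertex contribution is counted with the right parity; once that local picture is in place, the other two criteria are essentially free.
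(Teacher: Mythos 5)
Your argument is correct: tracing the boundary circle of each vertex disc of $\BG$ yields a closed walk in $H_v$ that covers every edge and visits every vertex, and the corner/arc bookkeeping does give degree equal to twice the number of arcs of $\partial\bv$ inside each face disc, which is exactly Edmonds' evenness condition (with loops at $v$ correctly counted twice). The paper offers no proof of this lemma---it states only that it ``is easily verified'' and can ``easily be deduced from'' Edmonds' paper---and your face-boundary-walk argument is precisely the standard verification being alluded to, so you have simply supplied the details the paper omits.
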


Edmonds' Criteria provides a characterization of natural duality.
\begin{theorem}[Edmonds \cite{Ed}]
Two graphs $G$ and $H$ are natural duals if and only if there exists a bijection $\varphi:\E(G)\rightarrow \E(H)$ that satisfies Edmonds' Criteria and the number of vertices in $G$ and $H$ that are incident to no edges is equal.
\end{theorem}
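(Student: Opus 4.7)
The plan is to handle the two directions separately. The forward direction is essentially already in place: if $G$ and $H$ are natural duals with dual embedding $\{\BG,\BG^*,\Sigma\}$ of ribbon graph neighbourhoods, then the cores sit inside $\Sigma$ with each edge of $G$ crossing a unique edge of $H$, and Lemma~\ref{l.ed} says the resulting bijection $\varphi:\E(G)\to\E(H)$ satisfies Edmonds' Criteria. The isolated vertex count is controlled by the observation in Subsection~\ref{ss.nd} that each isolated vertex of $G$ forces a spherical component of $\Sigma$ containing exactly one isolated vertex of $H$, so the two counts agree.

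The work is in the converse. Given $\varphi$ satisfying Edmonds' Criteria with matched isolated vertex counts, I would first set aside isolated vertices: pair them up (possible since the counts match) and embed each pair in its own copy of $S^{2}$, contributing spherical components to the final $\Sigma$. For the remaining non-isolated part, the key move is to manufacture a rotation system on $G$ from the Eulerian subgraphs $H_{v}$. For each $v\in\V(G)$, fix an Eulerian circuit $C_{v}$ of $H_{v}$; since $\varphi$ is a bijection and each edge-end of $H_{v}$ corresponds under $\varphi^{-1}$ to an edge-end of $G$ incident with $v$, the circuit $C_{v}$ determines a cyclic ordering of the edge-ends at $v$ in $G$ (an edge with both ends at $v$ contributes two, as in the bracketed clause of Edmonds' theorem). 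Taking this as the cyclic order at $v$ and, in the non-orientable setting, choosing a sign on each edge, we obtain a ribbon graph $\BG$ whose core is $G$. Let $\Sigma$ be the closed surface obtained by filling the punctures of $\BG$; then $\{\BG,\BG^{*},\Sigma\}$ is a dual embedding of ribbon graphs, hence $\{G,G^{*},\Sigma\}$ is a dual embedding of graphs.

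It remains to identify $G^{*}$ with $H$. A boundary component of $\BG$ traces out a cyclic sequence of edge-ends of $G$, which under $\varphi$ becomes a cyclic sequence of edge-ends of $H$. By construction of the rotation from the circuits $C_{v}$, the sequence of edge-ends encountered while walking one boundary component visits exactly the edges of $G_{w}$ for a single $w\in\V(H)$, and Edmonds' Criteria (iii) (each $G_{w}$ is Eulerian, in particular connected) guarantees exactly one boundary component per such $w$ and that it visits every edge-end of $G_{w}$. Thus the vertices of $G^{*}$ are in $\varphi$-respecting bijection with the vertices of $H$ with matching incidences, and condition (i) of Edmonds' Criteria patches the argument across connected components so that no two boundary components of different components of $\BG$ are accidentally identified with the same vertex of $H$. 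Reattaching the spherical isolated-vertex components gives the required dual embedding.

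The main obstacle is twofold. First, one must check that independent choices of the Eulerian circuits $C_{v}$ can be made consistently across all $v$ and, in the non-orientable case, paired with a choice of edge signs that yields a genuine ribbon graph rather than just a combinatorial rotation. Second, one must verify that the faces of the resulting embedding really correspond one-to-one with vertices of $H$ rather than breaking into further pieces; this is exactly where the symmetry between conditions (ii) and (iii) of Edmonds' Criteria is essential, since condition (ii) builds the rotation and condition (iii) controls the faces. These are the steps where I expect the bookkeeping to be most delicate and where Edmonds' original argument in \cite{Ed} does the real work.
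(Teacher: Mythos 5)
Your proposal is correct and takes essentially the same route as the paper, which itself only sketches this theorem and defers to \cite{Ed} for details: the forward direction is Lemma~\ref{l.ed} together with the pairing of isolated vertices on spherical components, and the converse builds the surface from the Eulerian circuits of the subgraphs $H_v$, with condition (iii) of Edmonds' Criteria controlling the faces. Your rotation-system construction is just the dual description of the paper's construction, which takes one polygon per vertex $v$ of $G$ (with boundary the circuit $C_v$ and $v$ embedded in its interior) and glues matching sides; in that formulation the edge-sign and face bookkeeping you flag is absorbed into the requirement that sides be identified so that polygon corners carrying the same vertex label of $H$ are glued together.
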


We will now give a brief overview of the idea of the proof of Edmonds' Theorem, referring the reader to \cite{Ed} for details. On one hand, if $G$ and $H$ are dual then there is a dual embedding $\{ G,H,\Sigma \}$ and we can use Lemma~\ref{l.ed} above to find a suitable bijection. 
Conversely, suppose  there is an edge bijection satisfying the conditions in the theorem. Consider $H_v$. By condition (ii) of Edmonds' Criteria, $H_v$ can be obtained by identifying edges of a cycle. We can assume this cycle bounds a polygon and that $v$ and its incident half-edges are embedded  in this polygon in such a way that an end of every half-edge lies on exactly one side of the polygon. 
And there is exactly one edge meeting each side of the polygon. 
Glue together the sides of  all of the polygons which arise in this way in such a way  that the embedded half-edges that come from the same edge of $G$ are identified. Condition (iii) of Edmonds' Criteria ensures this results in a surface. This gives a dual embedding $\{G,H,\Sigma\}$, so the graphs are dual as required. 

We will need one corollary of Edmonds' theorem.  The corollary is in fact a step from Edmonds' proof of his 
theorem in \cite{Ed}. As the corollary  follows immediately from this reference,  we will omit its proof. 

\begin{corollary}\label{c.ed2}
Let $\varphi: \E(G)\rightarrow \E(H)$ be a bijection that satisfies Edmonds' Criteria and $\{G,H,\Sigma\}$ be a dual embedding constructed by Edmonds' Theorem. Then for each  $v\in \V(G)$,  if we  cut the dual embedding along the subgraph $H_v$, the component that contains $v$ is a   surface that has been obtained by identifying some of the sides of a polygon. 
\end{corollary}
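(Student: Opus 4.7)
The plan is to read the claim directly off the construction of $\Sigma$ sketched in the paragraph before the corollary, checking only that the polygon $P_v$ used there reappears intact (up to corner identifications) when one cuts along $H_v$.

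First I would recall Edmonds' construction in slightly more detail. For each $v\in \V(G)$, condition (ii) of Edmonds' Criteria says that $H_v$ is Eulerian, so $H_v$ admits an Eulerian circuit $C_v$. Since $C_v$ traverses every edge of $H_v$ exactly twice (once on each ``side''), reading $C_v$ as a word in the edge labels of $H$ gives the boundary word of a polygon $P_v$ with $2|\E(H_v)|$ sides, in which sides carrying the same label are paired. The vertex $v$ is embedded in the interior of $P_v$, and the half-edges of $G$ at $v$ are drawn from $v$ out to the sides of $P_v$, using $\varphi$ to match each half-edge at $v$ with the side bearing the corresponding edge label. The surface $\Sigma$ is then assembled by identifying paired sides across all the polygons $\{P_v\}_{v\in \V(G)}$ simultaneously, and condition (iii) of Edmonds' Criteria is what guarantees that the link of every resulting point is a circle, so that $\Sigma$ is a genuine closed surface.

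The second step is the cutting argument. By construction the 1-skeleton of the polygonal decomposition of $\Sigma$ is precisely $H$; the edges of $H_v$ are exactly the (identified) sides of $P_v$, and they form the entire topological boundary of $P_v\subset \Sigma$. Cutting $\Sigma$ along $H_v$ therefore undoes every gluing of a side of $P_v$ to another side (whether that partner belonged to $P_v$ itself or to some other $P_{v'}$), and leaves all remaining gluings of $\Sigma$ intact. Since cutting along a graph only separates along one-dimensional edges, the identifications of polygon corners that collapsed several corners of $P_v$ to a single vertex of $H_v$ are not undone. The component of the cut surface containing $v$ is therefore the open disk interior of $P_v$ together with its boundary circle subject only to those corner identifications, i.e.\ a surface obtained by identifying some of the vertices of a polygon, as claimed.

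The only real obstacle is bookkeeping: one has to check that the component carrying $v$ is all of $P_v$ and nothing more. This follows because the interior of $P_v$ is a single open disk disjoint from $H_v$ (so it stays connected through the cut and remains on one side), while any point on or beyond an edge of $H_v$ is separated from the interior of $P_v$ by that very cut. Once this is noted the statement is immediate, which is why it is really just a reformulation of one step of Edmonds' proof in \cite{Ed} and why no separate argument is needed.
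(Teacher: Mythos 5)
The paper never actually proves this corollary---it explicitly defers to \cite{Ed}---and the intended argument is precisely the one you outline: identify the component cut off by $H_v$ with the polygon $P_v$ used in Edmonds' construction of $\Sigma$. Your cutting step and the final bookkeeping are sound. But your description of the construction contains a concrete error: an Eulerian circuit of $H_v$ traverses each edge exactly \emph{once}, not twice, so $P_v$ has one side for each edge-end of $G$ at $v$ (the degree of $v$, loops counted twice), not $2|\E(H_v)|$ sides. An edge $f$ of $H_v$ appears on $\partial P_v$ once for each end of $\varphi^{-1}(f)$ incident to $v$, hence twice only when $\varphi^{-1}(f)$ is a loop at $v$ (this is the sense in which the statement of Edmonds' Theorem counts such an edge ``twice''). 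The error is not harmless as written: globally, each edge of $H$ must occur on exactly two polygon sides across \emph{all} the $P_u$ (namely on $P_u$ and $P_{u'}$ for the two ends $u,u'$ of the corresponding edge of $G$), and those two occurrences are the ones identified; with your count a typical label would occur on four sides, the ``identify paired sides'' instruction would be ill-defined, and $P_v$ alone would already close up into a surface before any other polygon is attached.

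The good news is that the corollary's conclusion does not depend on the side count: all your cutting argument uses is that $\partial P_v$ consists precisely of appearances of edges of $H_v$ and that the interior of $P_v$ is an open disc meeting no edge of $H$. So after correcting the description of $P_v$, the rest of your proof goes through and coincides with the step of Edmonds' proof that the paper is citing. One further point deserves an explicit sentence rather than the parenthetical you give it: ``cutting along the subgraph $H_v$'' must be interpreted as slicing along the open edges without separating the sheets at the vertices of $H_v$. That convention is exactly what leaves distinct corners of $P_v$ lying over the same vertex of $H_v$ identified, yielding ``a polygon with some of its vertices identified'' rather than an honest closed disc, and it is the form in which the corollary is invoked in Claim~1 of the sufficiency half of Theorem~\ref{t.main}.
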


\subsection{Partial duality of graphs}\label{ss.pdg}
\begin{definition}
We say that two graphs are {\em partial duals} if they are cores of partially dual ribbon graphs.
\end{definition}

Let $G$ be a graph and $A\subseteq \E(G)$. By the notation $G^A$ we mean that $G^A$ is the core of $\BG^{\BA}$ where $G$ is the core of $\BG$ and $\BA$ is the edge set of $\BG$ that corresponds with $A$. 

We have seen that partially dual ribbon graphs can be characterized by  the existence of an appropriate partially dual embedding. A corresponding result holds for partial dual graphs. To describe the corresponding result, we make the following definition:
\begin{definition}
A {\em partial dual embedding} of graphs is a set
\[  \{ \widetilde{G}, \widetilde{H}, \Sigma, E   \}  \]  
where $\Sigma$ is an  surface without boundary,  $\widetilde{G}, \widetilde{H} \subset \Sigma$ are embedded graphs and $E$ is a set of coloured edges that are embedded in $\Sigma$ 
such that
\begin{enumerate}\renewcommand{\labelenumi}{(\roman{enumi})}
\item $\widetilde{G} \cup \widetilde{H}\cup E$ is an embedded graph in $\Sigma$;
\item $\{\widetilde{G},\widetilde{H},\Sigma\}$ is a dual embedding;
\item each edge in $E$ is incident to one vertex in $V(\widetilde{G})$ and one vertex in $V(\widetilde{H})$;
\item there are exactly two edges of each colour in $E$. 
\end{enumerate}
\end{definition}

\begin{theorem}\label{t.pdem}

Two graphs $G_1$ and $G_2$ are partial duals if and only if there exists a partial dual embedding $  \{ \widetilde{G}_1, \widetilde{G}_2,  \Sigma, E   \} $ such that for each $i$, 
$G_i$ is obtained from $\widetilde{G}_i$ by adding an edge  between the vertices of  $\widetilde{G}_i$ that are incident with the two edges  in $E$ that have the same colour, for each colour. 
\end{theorem}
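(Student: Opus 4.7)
The plan is to deduce Theorem~\ref{t.pdem} from its ribbon graph counterpart, Theorem~\ref{t.pdemrib}, using the interchange between graphs and ribbon graphs provided by the core operation and by taking a regular neighbourhood in a surface. The key translation is that a marking arrow on a vertex disc of a ribbon graph corresponds, after taking cores, to one end of an edge embedded in the surface with one endpoint in the underlying graph and the other endpoint in the dual graph. I expect the two directions to be essentially mirror images of one another, with the main work being a careful bookkeeping of this dictionary rather than any additional topology.

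For the forward direction, suppose $G_1$ and $G_2$ are partial duals, so by definition they are the cores of ribbon graphs $\BG_1$ and $\BG_2=\BG_1^{\BA}$ for some $\BA\subseteq\E(\BG_1)$. Apply Theorem~\ref{t.pdemrib} to obtain a partial dual embedding $\{\widetilde{\BG}_1,\widetilde{\BG}_2,\Sigma,\M\}$, where $\widetilde{\BG}_1=\BG_1\backslash\BA^c$ and $\widetilde{\BG}_2=(\BG_1\backslash\BA^c)^*$. Let $\widetilde{G}_i$ be the core of $\widetilde{\BG}_i$, canonically embedded in $\Sigma$ via its parent ribbon graph; then $\{\widetilde{G}_1,\widetilde{G}_2,\Sigma\}$ is a dual embedding of graphs. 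It remains to convert $\M$ into the required edge set $E$: for each marking arrow in $\M$, which sits on the shared boundary of a vertex disc of $\widetilde{\BG}_1$ and a vertex disc of $\widetilde{\BG}_2$, draw a short embedded arc in $\Sigma$ from the corresponding vertex of $\widetilde{G}_1$ out through the arrow's location and into the adjoining vertex of $\widetilde{G}_2$, inheriting the arrow's colour. Because $\M$ has exactly two arrows of each colour, this produces a set $E$ of embedded edges satisfying conditions (i)--(iv) of a partial dual embedding of graphs. Finally, attaching the edge of $\BG_1$ between its two marking arrows is exactly what it means, after taking cores, to add an edge between the two vertices of $\widetilde{G}_1$ incident to the two like-coloured edges of $E$; so $G_i$ is reconstructed from $\widetilde{G}_i$ and $E$ in the way required.

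For the converse, start with a partial dual embedding of graphs $\{\widetilde{G}_1,\widetilde{G}_2,\Sigma,E\}$ as in the theorem. Take a small regular neighbourhood of $\widetilde{G}_1$ in $\Sigma$ to obtain a ribbon graph $\widetilde{\BG}_1$ with core $\widetilde{G}_1$, and let $\widetilde{\BG}_2=\Sigma\backslash\V(\widetilde{\BG}_1)$, so that $\{\widetilde{\BG}_1,\widetilde{\BG}_2,\Sigma\}$ is a ribbon dual embedding whose cores form the given graph dual embedding. Shrink each edge $e\in E$ so that only a tiny neighbourhood of its endpoint on $\widetilde{G}_i$ lies inside the vertex disc of $\widetilde{\BG}_i$; the point where $e$ crosses out of the vertex disc of $\widetilde{\BG}_1$ (equivalently, enters the vertex disc of $\widetilde{\BG}_2$) becomes an arrow on the boundary of both vertex discs, with an orientation fixed by any convention on $e$ and a colour inherited from $e$. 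This produces a set $\M$ of marking arrows with exactly two of each colour, giving a partial dual embedding of ribbon graphs. By Theorem~\ref{t.pdemrib}, the ribbon graphs $\BG_i$ described by $\Sigma\backslash \V(\widetilde{\BG}_{3-i})\cup\M$ are partial duals. Reading off cores and comparing with the reconstruction rule in the statement shows these cores are $G_1$ and $G_2$, so $G_1$ and $G_2$ are partial duals.

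The main obstacle is verifying that the arrow-to-edge dictionary described above is truly a bijection between the data $\M$ and $E$ that preserves everything relevant: the pairing by colour, the incidence with a vertex of $\widetilde{G}_1$ on one end and $\widetilde{G}_2$ on the other, and the rule for how $G_i$ is reconstructed. A small secondary point is handling isolated vertices of $\widetilde{G}_i$ and the spherical components of the dual embedding that Section~\ref{ss.nd} flags; once these are addressed (any such sphere contributes nothing to $E$ or $\M$), the correspondence between marking arrows and embedded edges is local and completely symmetric, and the theorem follows.
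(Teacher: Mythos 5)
Your proof is correct and follows essentially the same route as the paper: both directions translate between marking arrows on the vertex discs of a ribbon-graph partial dual embedding and embedded coloured edges joining vertices of the two cores, and then invoke Theorem~\ref{t.pdemrib}. The only slip is the parenthetical claim that the spherical components arising from isolated vertices contribute nothing to $E$ or $\M$ --- they can carry edges of $E$ (see Example~\ref{ex.pdem}, where a sphere contains several such edges) --- but since your arrow-to-edge dictionary is local, this does not affect the argument.
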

\begin{example}\label{ex.pdem}
An example of a partial dual embedding is
\[\includegraphics[height=3cm]{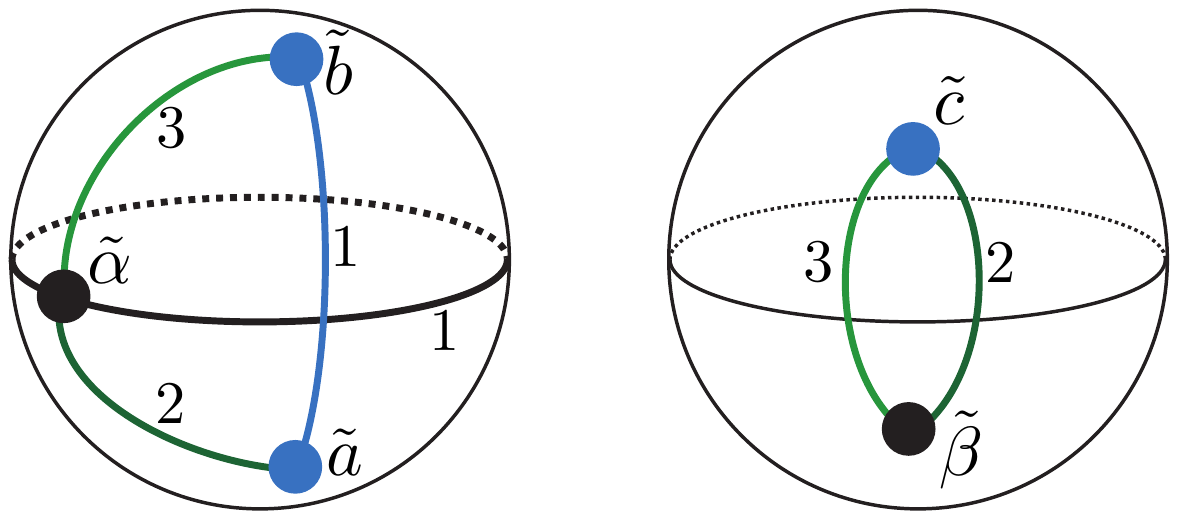},\]
where $\Sigma$ is the disjoint union of two spheres, $\widetilde{G}_1 =\left( \{ \tilde{\alpha}, \tilde{\beta} \}, \{1\} \right)$and  $\widetilde{G}_2=\left( \{ \tilde{a}, \tilde{b}, \tilde{c} \}, \{1\} \right)$. 
Following the recipe in the theorem we recover the graphs 
\[\raisebox{8mm}{$G=$ }\;\;\includegraphics[width=3cm]{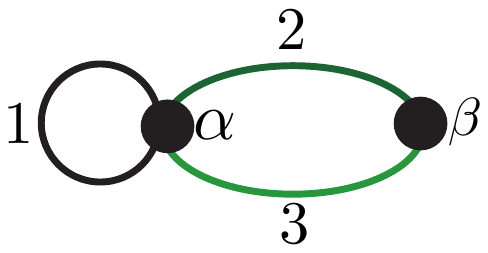} \quad\quad  \raisebox{8mm}{and}  \quad\quad  \raisebox{8mm}{$G^{\{1\}}=$ } \;\;\;\raisebox{-5mm}{\includegraphics[width=2.4cm]{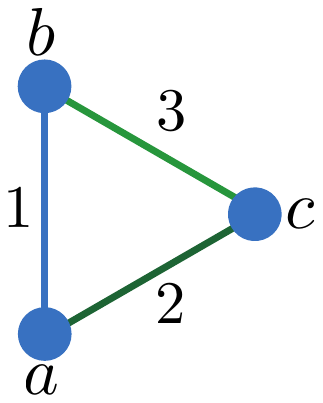}.}\]
These graphs are indeed partial duals as they are the cores of
\[  \raisebox{9mm}{$\BG=$ }   \includegraphics[width=4cm]{dex1}  \quad \quad  \raisebox{9mm}{and}  \quad\quad \raisebox{9mm}{$\BG^{\{1\}}=$ }\raisebox{-4mm}{\includegraphics[width=4cm]{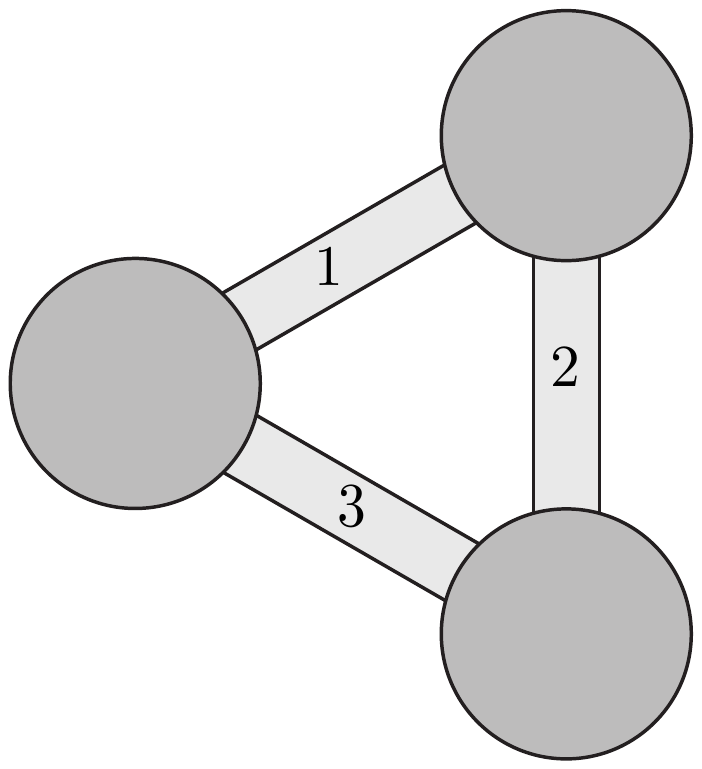}}\]
respectively.

\end{example}

We will now prove Theorem~\ref{t.pdem}. The idea behind the proof is to construct a correspondence between partial dual embeddings of ribbon graphs and their (embedded) cores. It then follows by Theorem~\ref{t.pdemrib} that the graphs constructed by the theorem are the cores of partially dual ribbon graphs.
\begin{proof}
First suppose that $G_1$ and $G_2$ are partial duals, so $G_1$ and $G_2$ are the cores of partially dual ribbon graphs. Then, by Theorem~\ref{t.pdemrib}, there exists 
a partial dual embedding     $\{\widetilde{\BG}_1, \widetilde{\BG}_2, \Sigma, \M\}$ such that $\Sigma \backslash \V(\widetilde{\BG}_2)\cup \M$  is an arrow-marked ribbon graph describing $\BG_1$; 
$\Sigma \backslash \V(\widetilde{\BG}_1)$  is an arrow-marked ribbon graph describing $\BG_2$;
$G_1$ is the core of $\BG_1$; and $G_2$ is the core of $\BG_2$.

A partial dual embedding of graphs    $  \{ \widetilde{G}_1, \widetilde{G}_2,  \Sigma, E   \} $ can be constructed from $\{\widetilde{\BG}_1, \widetilde{\BG}_2, \Sigma, \M\}$ in the following way: let $\widetilde{G}_1$ be the canonically embedded core of $\BG_1$ and  let $\widetilde{G}_2$ be the canonically embedded core of $\BG_2$.  Each arrow on $\Sigma$ meets exactly two vertices of $\widetilde{\BG}_1\cup \widetilde{\BG}_2$. For each arrow, add an embedded edge between the two corresponding vertices of the embedded graph $\widetilde{G}_1\cup \widetilde{G}_2$ which passes through this arrow. Colour the edge with the colour of the arrow that it passes through. The set of edges added in this way forms $E$. 

We need to show that $  \{ \widetilde{G}_1, \widetilde{G}_2,  \Sigma, E   \} $ is indeed a partial dual embedding of graphs and that the graphs $G_1$ and $G_2$ can be recovered from the partial dual embedding in the way described by the theorem.  

To see that $  \{ \widetilde{G}_1, \widetilde{G}_2,  \Sigma, E   \} $ is a partial dual embedding, first note that by construction  $\widetilde{G}_1$, $\widetilde{G}_2$,  $\widetilde{G}_1\cup \widetilde{G}_2\cup   \Sigma \cup E$ and $E$ are all embedded in $\Sigma$.   $  \{ \widetilde{G}_1, \widetilde{G}_2,  \Sigma  \} $ is a dual embedding since   $\{\widetilde{\BG}_1, \widetilde{\BG}_2, \Sigma\}$ is. Since each arrow in $\M$ meets one vertex in $\V(\BG_1)$ and one vertex in $\V(\BG_2)$, each edge in $E$ is incident to one vertex in $V(\widetilde{G}_1)$ and one vertex in $V(\widetilde{G}_2)$. The colouring requirement follows since there are exactly two edges of each colour in $\M$ and the edge colourings of $E$ are induced from $\M$. 

Finally, $\BG_i$ can be recovered from $\widetilde{\BG}_i\cup\M$ by adding edges between the marking arrows of the same colour.  Therefore, if $\bu$ and $\bv$ are vertices of $\widetilde{\BG}_i$ which are marked with an arrow of the same colour and $u$ and $v$ are the corresponding vertices of $\widetilde{G}_i$, then to construct the core of $\BG_i$ we need to add an edge between $u$ and $v$. But since $u$ and $v$ are each  incident with edges in $E$ of the same colour, we need to add an edge  between the vertices of  $\widetilde{G}_i$ that are incident with the two edges  in $E$ of the same colour.  This is exactly the construction described in the statement of the theorem. Doing this for each colour gives $G_i$, completing the proof of necessity.

\smallskip

Conversely, suppose that $  \{ \widetilde{G}_1, \widetilde{G}_2,  \Sigma, E   \} $ is a partial dual embedding and that $G_1$ and $G_2$ are obtained as described in the statement of the theorem.
Construct a partial dual embedding $\{\widetilde{\BG}_1, \widetilde{\BG}_2, \Sigma, \M\}$ of ribbon graphs in the following way: take a small neighbourhood of the embedded graph $\widetilde{G}_1$ to form $\widetilde{\BG}_1$; let $\widetilde{\BG}_2 = ( \Sigma\backslash \widetilde{\BG}_1, \E (\widetilde{\BG}_1) )$; wherever an edge in $E$ meets a boundary of vertices add an arrow pointing in an arbitrary direction which is coloured by the colour of the edge in $E$.  $\M$ is the set of such coloured arrows.  

    To see that $\{\widetilde{\BG}_1, \widetilde{\BG}_2, \Sigma, \M\}$ is a partial dual embedding, note that 
$\{\widetilde{\BG}_1, \widetilde{\BG}_2, \Sigma\}$ is a dual embedding since  $  \{ \widetilde{G}_1, \widetilde{G}_2,  \Sigma   \} $ is, and that there are exactly two arrows of each colour since there are exactly two edges of each colour in $E$.  

Let $\BG_i$ denote the ribbon graph described by the arrow-marked ribbon graph $\widetilde{\BG}_i\cup\M$. Then  $G_i$ is the core of $\BG_i$ (since whenever an edge is added between two vertices of $\widetilde{G}_i$ in the formation of $G_i$, an edge is added between the corresponding vertices of  $\widetilde{\BG}_i$ in the formation of $\BG_i$). Finally, $G_1$ and $G_2$ are partial dual graphs  since, by 
Theorem~\ref{t.pdemrib}, $\BG_1$ and $\BG_2$ are partial dual ribbon graphs.
\end{proof}

The corollary below follows from the construction of a partial dual embedding in the proof above.
\begin{corollary}\label{c.pdem}
If $G$ and $G^A$ are partial duals then the corresponding partial dual embedding as constructed by Theorem~\ref{t.pdem} is $\{G\backslash A^c ,  G^A\backslash \natural (A^c), \Sigma, E\}$, where $A^c= \E(G)\backslash A$. Moreover, 
$G$ (respectively $G^A$) is obtained from $G\backslash A^c$  (respectively $G^A\backslash \natural (A^c)$)  by adding an edge  between the vertices of  $G\backslash A^c$  (respectively $G^A\backslash \natural (A^c)$) that are incident with the two edges  in $E$ that have the same colour, for each colour. 

\end{corollary}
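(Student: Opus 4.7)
The plan is to trace through the necessity half of the proof of Theorem~\ref{t.pdem} and identify, step by step, exactly which ribbon-graph data is fed into the construction, then take cores. Concretely, recall that in the proof of Theorem~\ref{t.pdem}, the partial dual embedding of graphs is obtained from a partial dual embedding of ribbon graphs $\{\widetilde{\BG}_1, \widetilde{\BG}_2, \Sigma, \M\}$ by setting $\widetilde{G}_i$ equal to the canonically embedded core of $\widetilde{\BG}_i$ and replacing each pair of same-coloured arrows in $\M$ by a pair of same-coloured embedded edges. So the first step is to pin down the ribbon-graph embedding that was used.

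Second, I would invoke the necessity half of Theorem~\ref{t.pdemrib}, whose proof gives the specific partial dual embedding
\[
\{\, \BG\backslash \BA^c,\; (\BG\backslash \BA^c)^*,\; \Sigma,\; \M\,\}
\]
associated to the pair $\BG, \BG^{\BA}$. Applying Lemma~\ref{l.props}(3), this is equal to
\[
\{\, \BG\backslash \BA^c,\; \BG^{\BA}\backslash \varphi(\BA^c),\; \Sigma,\; \M\,\}.
\]
Since the core construction commutes with edge deletion (deleting an edge of a ribbon graph just removes the corresponding edge of the core), the canonically embedded cores of these two ribbon graphs are exactly $G\backslash A^c$ and $G^A\backslash \varphi(A^c)$. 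Feeding these into the recipe of Theorem~\ref{t.pdem} therefore produces a partial dual embedding of the form $\{G\backslash A^c,\; G^A\backslash \varphi(A^c),\; \Sigma,\; E\}$, where $E$ is the set of coloured embedded edges obtained from $\M$ as in the proof of Theorem~\ref{t.pdem}. This gives the first assertion of the corollary.

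For the ``moreover'' clause, I would simply quote the recovery step from Theorem~\ref{t.pdem}: by construction, the graph $G_i$ is obtained from $\widetilde{G}_i$ by adding, for each colour, an edge between the two vertices of $\widetilde{G}_i$ that are incident with the two same-coloured edges of $E$. Specialising to $\widetilde{G}_1 = G\backslash A^c$ and $\widetilde{G}_2 = G^A\backslash \varphi(A^c)$ immediately yields the stated description of $G$ and $G^A$.

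There is essentially no genuine obstacle here; the corollary is a bookkeeping consequence of the proofs of Theorems~\ref{t.pdemrib} and~\ref{t.pdem} together with Lemma~\ref{l.props}(3). The only point that requires any care is checking that passing to cores is compatible with the operation of deleting the edges in $\BA^c$ (so that the core of $\BG\backslash \BA^c$ really is $G\backslash A^c$, and similarly on the dual side after rewriting via Lemma~\ref{l.props}(3)), but this is immediate from the natural edge correspondence between a ribbon graph and its core noted in Subsection~\ref{ss.nd}.
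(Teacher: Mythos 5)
Your proposal is correct and follows exactly the route the paper intends: the paper gives no separate proof of Corollary~\ref{c.pdem}, stating only that it ``follows from the construction of a partial dual embedding in the proof above,'' and that construction is precisely the one you unwind (the ribbon-graph embedding $\{\BG\backslash\BA^c, (\BG\backslash\BA^c)^*, \Sigma, \M\}$ from Theorem~\ref{t.pdemrib}, rewritten via Lemma~\ref{l.props}(3), with cores and coloured edges taken as in Theorem~\ref{t.pdem}). Your added remark that passing to cores commutes with edge deletion is the only detail the paper leaves implicit, and you handle it correctly.
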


\begin{definition}
If $G$ and $H$ are partially dual graphs that can be obtained from a partial dual embedding $\{\widetilde{G}, \widetilde{H}, \Sigma, E\}$ in the way described by Theorem~\ref{t.pdem}, then we say  that $\{\widetilde{G}, \widetilde{H}, \Sigma, E\}$ is a {\em partial dual embedding} for $G$ and $H$.
\end{definition}

We will use the following basic observations about partial dual embeddings. Suppose $\{\widetilde{G}, \widetilde{H}, \Sigma, E\}$ is a partial dual embedding for $G$ and $H$.
Then since $G$ is obtained from $\widetilde{G}$ by adding edges, $\widetilde{G}$ is a spanning subgraph of $G$. Similarly $\widetilde{H}$ is a spanning subgraph of $H$. In particular, this means that there is a natural bijection between the vertices of $G$ and  of $\widetilde{G}$, and a natural bijection between  the vertices of $H$ and  of $\widetilde{H}$.  
The partial dual embedding induces a natural bijection between the edges of $G$ and the edges of $H$ in the following way. First observe that by the way $G$ is obtained from the partial dual embedding, the set of colours of the edges in $E$ together with the colours of the edges in $\E(\widetilde{G})$ is in one-to-one correspondence with the set of colours of the edges in $\E(G)$. This correspondence induces a bijection $\psi_1$ between the sets of colours. Similarly, there is a bijection $\psi_2$ between the set of colours of of the edges in $E$ together with the colours of the edges in $\E(\widetilde{H})$ to the set of colours of the edges in $\E(H)$. In addition to these two bijections, the dual embedding $\{\widetilde{G}, \widetilde{H}, \Sigma\}$ provides a natural bijection between $\E(\widetilde{G})$ and $\E(\widetilde{H})$ (and edge $e$ is mapped to the unique edge it intersects). This induces a natural bijection $\psi_3$ between $E$ together with the colours of the edges in $\E(\widetilde{G})$ and $E$ together with the colours of the edges in $\E(\widetilde{H})$. 
Then  $\psi_2\circ\psi_3\circ(\psi_1)^{-1}$  is the required natural bijection from  $\E(G)$ to $\E(H)$ induced by $\{\widetilde{G}, \widetilde{H}, \Sigma, E\}$. 
Notice that since $G$ and $H$ were obtained from a partial dual embedding, we have $\psi_2\circ\psi_3\circ(\psi_1)^{-1} = \natural$. Also note that any colouring of one of $\E(G)$, $\E(H)$, $\E(\widetilde{G} )$ and  $\E(\widetilde{H} )$ induces a colouring on the other three.

\subsection{A generalization of Edmonds' Theorem}\label{ss.ged}

\begin{theorem}\label{t.main}
Two graphs $G$ and $H$ are partial duals if and only if there exists a bijection $\varphi: \E(G) \rightarrow \E(H)$, such that 
\begin{enumerate}
\item $\left. \varphi\right|_{A}: \E(G)\backslash A^c\rightarrow \E(H)\backslash \varphi(A^c)$ satisfies Edmonds' Criteria for some subset $A\subseteq \E(G)$;
\item If $v\in \V(G)$ is incident to an edge in $A$, and if $e\in \E(G)$ is incident to $v$, then $\varphi(e)$ is incident to a vertex of $\varphi(A)_v$. Moreover, if both ends of $e$ are incident to $v$, then both ends of $\varphi(e)$ are incident to vertices of $\varphi(A)_v$.

\item If $v\in \V(G)$ is not incident to an edge in $A$, then there exists a vertex $v' \in \V(H)$ with the property that  $e\in \E(G)$ is incident to $v$ if and only if  $\varphi(e)\in \E(H)$ is incident to $v'$. Moreover, both ends of $e$ are incident to $v$ if and only if both ends of $\varphi(e)$ are incident to $v'$.

\end{enumerate}
Here $\varphi(A)_v $ is the subgraph of $H$ induced by the images of the edges from $A$ that are incident with $v$. 
\end{theorem}

In the statement of the theorem it is to be understood that when we say  $\left. \varphi\right|_{A}$ satisfies Edmonds' Criteria the graphs $H_v$ and $G_v$ of Definition~\ref{d.ed} are subgraphs of $ G\backslash A^c$ and $H\backslash \varphi(A^c)$ respectively.

For ease of comprehension we will separate the proof of Theorem~\ref{t.main} in to the case of sufficiency and necessity. We prove sufficiency first.  

\bigskip

The idea behind the proof of sufficiency is to use the bijection $\varphi$ to construct a partial dual embedding for $G$ and $H$. In a little more detail, suppose we are given the graphs 
$G$ and  $H$, a bijection $\varphi: \E(G) \rightarrow \E(H)$ and a subset $A\subseteq \E(G)$ that satisfy the conditions in the theorem. Then since $\left. \varphi\right|_{A}$ satisfies 
Edmonds' Criteria, we can construct a dual embedding $\{ \widetilde{G},  \widetilde{H},  \Sigma\}$, with   $\widetilde{G} = G\backslash A^c$ and  $\widetilde{H}=  H \backslash \varphi(A^c) $. (A little care has to be taken in cases where $G\backslash A^c$ contains vertices which are not incident to any edges, if $v$ is such a vertex and $v'$ is the vertex of $H$ guaranteed by the third condition in the theorem, then choose a  dual embedding $\{ \widetilde{G},  \widetilde{H},  \Sigma\}$ in which $v$ and $v'$ are embedded in the same component of $\Sigma$ for each such $v$.)  
We can obtain a set of partial dual embeddings from the dual embedding $\{ \widetilde{G},  \widetilde{H},  \Sigma\}$ as follows: for each end of each edge $e\in A^c$ that is incident to a vertex $v$, embed an $e$-coloured edge between the vertex $v$ of $\widetilde{G}$ and one of the vertices of $\widetilde{H}$ that bounds the face that $v$ lies in.  
As we will see, the second and third conditions on $\varphi$ in the statement of the theorem ensures that a partial dual embedding for $G$ and $H$ is among the partial dual embeddings that can be constructed in this way (we will explicitly construct a partial dual embedding for $G$ and $H$ for $G$ and $H$ in the proof of the theorem.)

\begin{example}
The construction of a partial dual embedding for $G$ and $H$ from $\varphi$ is illustrated in this example. The bijection $\varphi$ is defined formally in the proof of sufficiency.
Let 
\[  G=  \raisebox{-8mm}{\includegraphics[height=16mm]{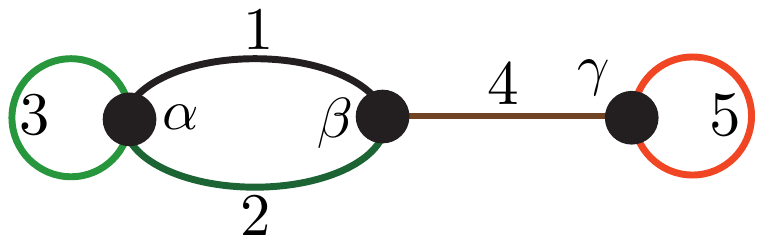}} \quad \quad \quad \text{and }\quad \quad \quad 
H=  \raisebox{-8mm}{\includegraphics[height=16mm]{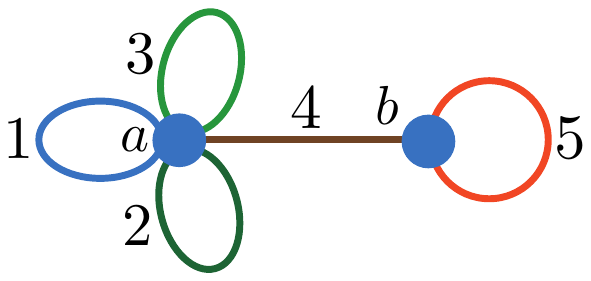}}.
\]
Let $\varphi: \E(G)\rightarrow\E(H)$ by $\varphi(i)=i$, for $i=1, \ldots, 5$. It is easily checked that $\left. \varphi\right|_{A}$ satisfies 
Edmonds' Criteria with $A=\{1\}$ and $\gamma'=b$. To construct the partial dual embedding, first use the fact that  $\left. \varphi\right|_{A}$ satisfies 
Edmonds' Criteria to construct a dual embedding:
\[  \includegraphics[height=30mm]{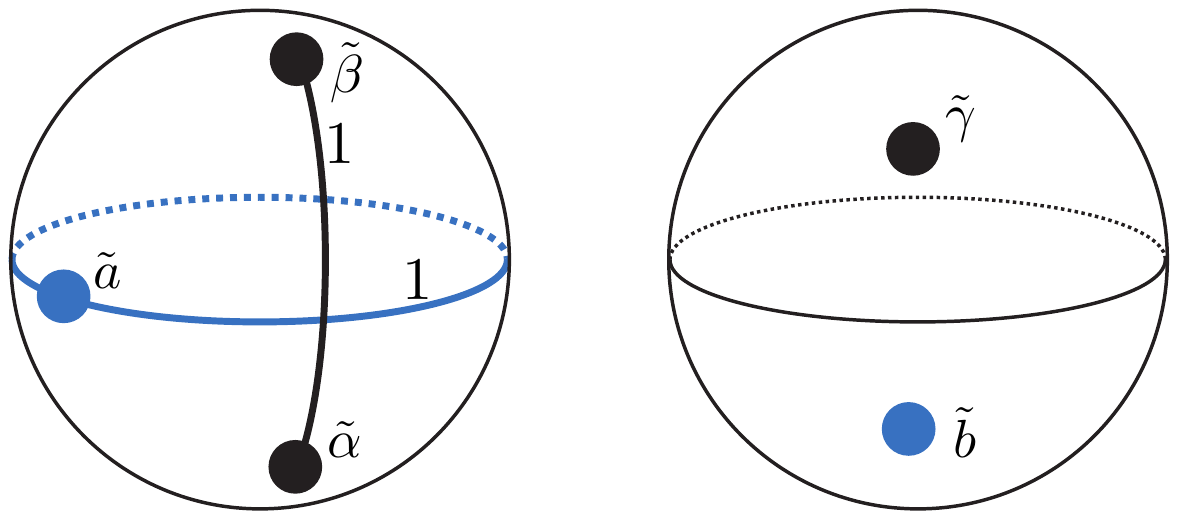}. \]
It remains to add the set $E$ of coloured embedded edges. We do this colour by colour. 

$2$-coloured edges: edge $2$ is incident to two vertices, $\alpha$ and $\beta$, that are incident to  an edge of $A$. $\varphi(2)$ is incident to $a$.  $a\in \varphi(A)_{\alpha}$ and  $a\in \varphi(A)_{\beta}$ so we can  embed a $2$-coloured edge between  $\tilde{a}$ and $\tilde{\alpha}$ and another $2$-coloured edge between  $\tilde{a}$ and $\tilde{\beta}$.

$3$-coloured edges: both ends of edge $3$ are incident to  $\alpha$, $\alpha$ is incident to an edge of $A$, and $\varphi(3)$ is incident to $a$. As $a\in \varphi(A)_{\alpha}$,  we can  embed two $3$-coloured edges between  $\tilde{a}$ and $\tilde{\alpha}$.

$4$-coloured edges: edge $4$ is incident to $\beta$ and $\gamma$. Only $\beta$ is incident to a vertex of $A$. $\varphi(4)$ is incident to $a$ and $b$. $b=\gamma'$ so embed a $4$-coloured edge between $\tilde{b}$ and $\tilde{\gamma}$. $a\in \varphi(A)_{\beta}$ so embed a $4$-coloured edge between $\tilde{a}$ and $\tilde{\beta}$. 

$5$-coloured edges: both ends of edge $5$ are incident to   $\gamma$. $\gamma$ does not meet an edge of $A$. $\varphi(5)$ is incident to $b$ and  $b=\gamma'$ so embed two  $5$-coloured edges between $\tilde{b}$ and $\tilde{\gamma}$.

This gives a (non-unique) partial dual embedding 
\[  \includegraphics[height=30mm]{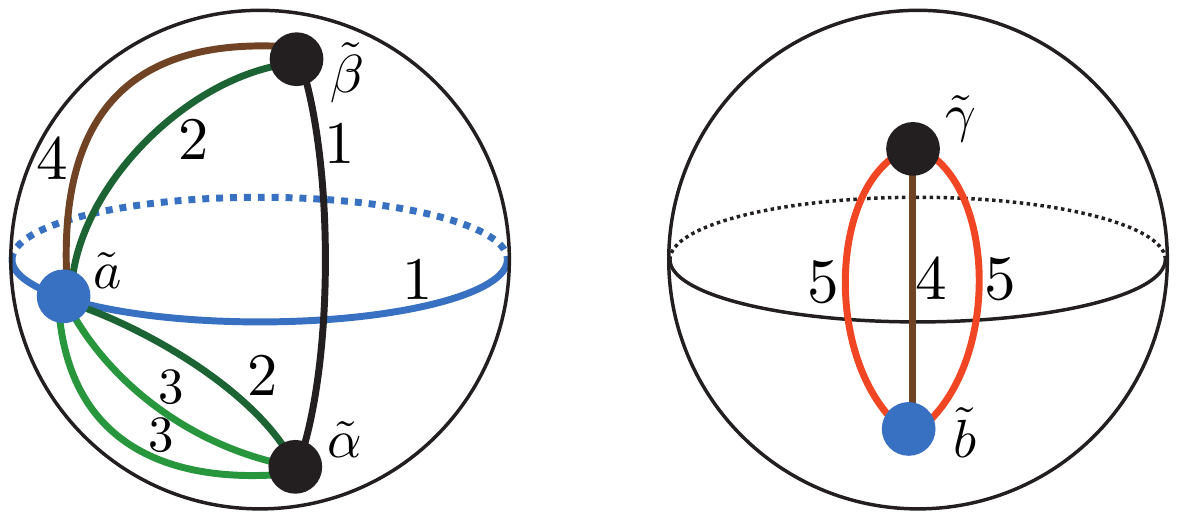}. \]
This partial dual embedding is necessarily  a partial dual embedding for $G$ and $H$.

\end{example}

\begin{proof}[Proof of Theorem~\ref{t.main} (Sufficiency).]
Given graphs  $G$ and  $H$, assume  $\varphi: \E(G) \rightarrow \E(H)$ and $A\subseteq \E(G)$ satisfy the conditions given in the statement of Theorem~\ref{t.main}.
Since $\left. \varphi\right|_{A}$ satisfies 
Edmonds' Criteria, we can construct a dual embedding $\{ \widetilde{G},  \widetilde{H},  \Sigma\}$, where   $\widetilde{G} = G\backslash A^c$,  $\widetilde{H}=  H \backslash \varphi(A^c) $ and $A^c:=\E(G)\backslash A$.  
  $\widetilde{G}$ may contain vertices that are not incident to an edge in $A$. For each vertex $v\in \V(G)$ that is not incident with any edge in $A$ and each associated vertex $v' \in \V(H)$ given by the third condition in the theorem,  choose a  dual embedding $\{ \widetilde{G},  \widetilde{H},  \Sigma\}$ in which the vertices of $\tilde{v} \in \V(\widetilde{G})$ and $\tilde{v}' \in \V(\widetilde{H})$, corresponding to $v$ and $v'$, are embedded in the same (spherical) component of $\Sigma$.

We will now construct a partial dual embedding $\{ \widetilde{G},  \widetilde{H},  \Sigma, E\}$ from this dual embedding.  Let $e\in A^c$ be an edge of $G$ and let $e$ be incident to (not necessarily distinct) vertices $u$ and $v$ in $\V(G)$. Further suppose that the edge $\varphi(e)$ of $H$ is incident to (not necessarily distinct) vertices $a$ and $b$ in $\V(H)$.
Let $\tilde{u}$ and $\tilde{v}$ respectively  denote the vertices in $\widetilde{G}$ corresponding  to the vertices $u$ and $v$ of $G$; and let $\tilde{a}$ and $\tilde{b}$ respectively  denote the vertices in $\widetilde{H}$ corresponding  to the vertices $a$ and $b$ of $H$. 
There are three cases to consider: if $u$ and $v$ are both incident to an edge of $A$,  if exactly one of $u$ or $v$ is incident to an edge of $A$, and if neither $u$ and $v$ is incident to an edge of $A$.

 First suppose both  $u$ and $v$ are incident to an edge of $A$. Then by the second condition of the theorem,  either $a\in \varphi(A)_u$ or  $a\in \varphi(A)_v$. Without loss of generality, assume that $a\in \varphi(A)_u$ and so $b\in \varphi(A)_v$. Then embed (we will prove in Claim 1 below that the edges can indeed be embedded) one $e$-coloured edge between the vertices $\tilde{u}$ and $\tilde{a}$ in $\Sigma$, and one   $e$-coloured edge between the vertices $\tilde{v}$ and $\tilde{b}$ in $\Sigma$.

 Secondly, suppose that exactly one of $u$ or $v$ is incident to an edge of $A$. Without loss of generality assume that $v$ is incident to an edge in $A$, then either $a \in \varphi(A)_v$ or  $b \in \varphi(A)_v$. Also without loss of generality we can assume that $b \in \varphi(A)_v$. Then embed (we will prove in Claim 1 below that the edges can indeed be embedded) one $e$-coloured edge between the vertices $\tilde{u}$ and $\tilde{a}$ in $\Sigma$. As for the vertex $u$ which is not incident to an edge in $A$, let $u'\in \V(H)$ denote the vertex given by condition three in the theorem. By the third condition in the theorem, we have $a=u'$. Embed one $e$-coloured edge between the vertices $\tilde{u}$ and $\tilde{a}$ in $\Sigma$. (The embedding can be done since  we have chosen $\{ \widetilde{G},  \widetilde{H},  \Sigma\}$ so that   $\tilde{u}$ and $\tilde{a} = \tilde{u}'$ are embedded on the same spherical component of $\Sigma$.) 
 
 Thirdly, suppose that neither $u$ and $v$ is incident to an edge of $A$. Let $u'$ and $v'$ denote the vertices of $H$ arising from $u$ and $v$, respectively, as in the third condition of the theorem.   Then also by the third condition of the theorem $a=u'$ or $b=u'$. Without loss of generality assume that  $a=u'$, then $b=v'$. In this case embed  one $e$-coloured edge between the vertices $\tilde{u}$ and $\tilde{a}$ in $\Sigma$, and one   $e$-coloured edge between the vertices $\tilde{v}$ and $\tilde{b}$ in $\Sigma$. (The embedding can be done since  we have chosen $\{ \widetilde{G},  \widetilde{H},  \Sigma\}$ so that   $\tilde{u}$ and $\tilde{a} = \tilde{u}'$ are embedded on the same spherical component of $\Sigma$, and $\tilde{v}$ and $\tilde{b}=\tilde{v'}$ are are embedded on the same spherical component of $\Sigma$.)

Note that even in the cases where $u=v$ or $a=b$, we are embedding exactly two $e$-coloured edges. Let $E$ denote the set of coloured embedded edges obtained by carrying out this procedure for each edge $e\in A^c$.  We will prove in claims 1 and 2 below that  $\{ \widetilde{G},  \widetilde{H},  \Sigma, E\}$ is a partial dual  embedding for $G$ and $H$. The sufficiency of the bijection in Theorem~\ref{t.main} will then follow by Theorem~\ref{t.pdem}.

\medskip

\noindent \underline{Claim 1:} $\{ \widetilde{G},  \widetilde{H},  \Sigma, E\}$ is a partial dual  embedding.

\medskip

\noindent {\em Proof of Claim 1:} 
By construction each edge in $E$ is incident to one vertex of $\widetilde{G}$ and one vertex of   $\widetilde{H}$, and there are exactly two edges of each colour. To show that $\{ \widetilde{G},  \widetilde{H},  \Sigma, E\}$ is a partial dual  embedding, it remains to verify that the edges in $E$ that are incident to $w$ and $\varphi(A)_w$ can actually be embedded in $\Sigma$.  To do this, let $w$ be vertex of $G$, $\tilde{w}$ be the vertex in $\widetilde{G}$ corresponding to $w$ and let $\widetilde{\varphi(A)_w}$ denote the part of the embedded graph $\widetilde{H}$ corresponding to  $\varphi(A)_w$. By Corollary~\ref{c.ed2}, if we cut $\Sigma$ along $\widetilde{\varphi(A)_w}$ then the component that contains $\tilde{w}$ is  a surface that has been obtained by identifying some of the sides of a polygon. Furthermore,  the image of $\widetilde{G}$ in this polygon consists of $\tilde{w}$ and some half-edges between $\tilde{w}$ and the sides of the polygon. It is clear that we may then embed any number of edges between $\tilde{w}$ and the vertices of the polygon. Therefore we can embed any number of edges between  $\tilde{w}$  and  vertices of     $\widetilde{\varphi(A)_w}$ in the dual embedding $\{ \widetilde{G},  \widetilde{H},  \Sigma \}$. Finally, since any of these new embedded edges between $\tilde{w}$  and vertices of     $\widetilde{\varphi(A)_w}$ lie in the face containing $\tilde{w}$, we can embed as many edges as we please between $\tilde{w}$  and a vertex of     $\widetilde{\varphi(A)_w}$ for each  $\tilde{w}\in \V(\widetilde{G})$. Thus the set $E$ constructed above is embedded and  $\{ \widetilde{G},  \widetilde{H},  \Sigma, E\}$ is a partial dual  embedding.
This completes the proof of Claim~1.

\medskip

\noindent \underline{Claim 2:} $\{ \widetilde{G},  \widetilde{H},  \Sigma, E\}$ is a partial dual  embedding for $G$ and $H$.

\medskip

\noindent {\em Proof of Claim 2:} It remains to show that $G$ and $H$ can be obtained from  $\{ \widetilde{G},  \widetilde{H},  \Sigma, E\}$ using the procedure described in Theorem~\ref{t.pdem}.  Let $\widehat{G}$ be the graph obtained from $\widetilde{G}$ and $\widehat{H}$  be the graph obtained from $\widetilde{H}$ as described in Theorem~\ref{t.pdem}.  We first show that $G$ is isomorphic to $\widehat{G}$ ($G\cong \widehat{G}$). We immediately have that $G\backslash A^c \cong \widetilde{G} \cong \widehat{G}\backslash A^c$. Now suppose $e\in A^c$ is an edge of $G$. Then, by the construction of $\{ \widetilde{G},  \widetilde{H},  \Sigma, E\}$, $e$ incident with vertices $u$ and $v$ in $G$  if and only if there is an $e$-coloured edge between $\tilde{u}$ and $\tilde{v}$ in $\widetilde{G}$, and by the definition of $\widehat{G}$, this happens if and only if there is an $e$-coloured edge between the vertices $\hat{u}$ and $\hat{v}$  of $\widehat{G}$ that correspond to the vertices  $\tilde{u}$ and $\tilde{v}$. Thus $G\cong \widehat{G}$. A similar argument shows that $H\cong \widehat{H}$. This completes the proof of Claim~2 and of the ``if'' part of   Theorem~\ref{t.main}

\end{proof}

\bigskip

We now turn our attention to the necessity of the bijection in the statement of Theorem~\ref{t.main}.
The idea behind the proof of the ``only if'' part of Theorem~\ref{t.main} is as follows. If $G$ and $G^A$ are partial dual graphs then there is a partial dual embedding $\{G\backslash A^c ,  G^A\backslash \natural (A^c), \Sigma, E\}$ for $G$ and $G^A$. The partial dual embedding can be used to  induce a colouring of the edges of $G^A$ using the colouring of the edges in $G$ (as described in Subsection~\ref{ss.pdg}). This defines a bijection $\varphi$ between the edge sets. Then since   $\{G\backslash A^c ,  G^A\backslash \natural (A^c), \Sigma\}$ is a  dual embedding, it follows that $\left. \varphi\right|_{A}$ satisfies Edmonds' Criteria. Also since $\{G\backslash A^c ,  G^A\backslash \natural (A^c), \Sigma\}$ is a  dual embedding, 
$\varphi(A)_v$ can be  identified with the edges and vertices of $ G^A\backslash \natural (A^c)\subset \Sigma$ that bound the face containing $v$ in the dual embedding. Since each edge in  $E$ is embedded, the other end of the edge in $E$ incident to $v$ must be in  $\varphi(A)_v$.  It then follows that $\varphi$ satisfies the conditions of the theorem.

\begin{example}
Using the graphs $G$, $H=G^{\{1\}}$ and the partial dual embedding from Example~\ref{ex.pdem}. It is easily checked that the colouring of $G$ induces the colouring of $H$ given.
 Therefore $\varphi: i \mapsto i$, for $i=1,2,3$  and $\beta'=c$.

From the dual embedding we see that $(\{\alpha, \beta\}, \{1\})$ and $(\{a, b, c \}, \{1\})$ are dual graphs, $\left. \varphi\right|_{A}$ must then satisfy Edmonds' Condition with $A=\{1\}$. 
Since   $\tilde{a}$ and $\tilde{b}$ lie in faces bounded by $\    (\{\tilde{\alpha}, \tilde{\beta}\}, \{1\}) =\varphi(A)_{\alpha} $, the second condition is satisfied. Since $\tilde{\beta}$ and $\tilde{c}$ lie in a sphere with $2$- and $3$-coloured edges between them, the graphs $G$ and $H$ must be identical at the vertices $\beta$ and $c$ and the third condition of the theorem must hold.

\end{example}

\begin{proof}[Proof of Theorem~\ref{t.main} (Neccesity).]
Suppose that $G$ and $H$ are partial duals. Then $H=G^A$ for some $A\subseteq \E(G)$ and by Corollary~\ref{c.pdem}, there is a partial dual embedding $\{\widetilde{G}, \widetilde{H}, \Sigma, E\}$ for $G$ and $H$, where $\widetilde{G}=G\backslash A^c$,  $\widetilde{H}=H\backslash \natural (A^c)$, and $A^c=\E(G)\backslash A$.
We will use this partial dual embedding to construct a mapping $\varphi: \E(G)\rightarrow \E(H)$ that satisfies the conditions of the theorem.

As described at the end of Subsection~\ref{ss.pdg}, the partial dual embedding   $\{\widetilde{G}, \widetilde{H}, \Sigma, E\}$ can be used to induce a colouring of $\E(H)$ using the colouring of  $\E(G)$. Assume that $\E(H)$  is equipped with this induced colouring. Define 
a mapping $\varphi: \E(G)\rightarrow \E(H)$ by setting $\varphi(e)$ to be the unique edge in $\E(H)$ of the same colour as $e$. (This means that $\varphi$ is equal to the natural mapping $\natural$ between the edge sets of $G$ and $G^A=H$.) It remains to show that $\varphi$ satisfies the conditions stated in the Theorem~\ref{t.main}. This is verified in the sequence  of four claims below.

\medskip

\noindent \underline{Claim 1:} $\varphi$ is a bijection.

\medskip

\noindent {\em Proof of Claim 1:} 
Claim 1 follows immediately from the definition of $\varphi$.

\noindent \underline{Claim 2:} $\left. \varphi\right|_{A}$ satisfies Edmonds' Criteria.

\medskip

\noindent {\em Proof of Claim 2:} 
$\left. \varphi\right|_{A}: \E(G)\backslash A^c \rightarrow  \E(H)\backslash \varphi(A^c) $. Recall that   
 $\widetilde{G}\cong G\backslash A^c$,  $\widetilde{H} \cong H\backslash \varphi (A^c)=H\backslash \natural (A^c)$ and $\{\widetilde{G}, \widetilde{H}, \Sigma\}$ is a dual embedding. For each  $e\in A$, $\varphi(e)$ is defined to be the edge of $\E(H)\backslash \varphi(A^c)$  corresponding to the edge in  $\widetilde{H}$ that intersects the  $e$-coloured in $\widetilde{G}$. It then follows by Lemma~\ref{l.ed} that $\left. \varphi\right|_{A}$ satisfies Edmond's criteria. This completes the proof of Claim~2.

\noindent \underline{Claim 3:} If $e\in \E(G)$ is incident to a vertex $v\in \V(G)$ and $v$ is incident to an edge in $A$, then $\varphi(e)$ is incident to a vertex of $\varphi(A)_v$.  Moreover, if both ends of $e$ are incident to $v$, then both ends of $\varphi(e)$ are incident to vertices of $\varphi(A)_v$.

\medskip

\noindent {\em Proof of Claim 3:} 
There are two cases to consider: when $e\in A$ and when $e\notin A$.

\medskip

First suppose that $e\in A$ and that $e$ is incident to a vertex $v$. Then $\varphi(A)_v$ is the subgraph of $H$ induced by the images of $A$ that are incident to $v$. However, $e\in A$ and $e$ is incident to $v$ so $\varphi(e)$ is an edge  in the subgraph $\varphi(A)_v$ and is therefore incident to a vertex of  $\varphi(A)_v$ as required.  

\medskip

Now suppose that $e\notin A$ and that $e$ is incident to a vertex $v$. Let $\tilde{v}$ denote the vertex in $\widetilde{G}$ corresponding to $v$. Then there is an embedded $e$-coloured edge $\tilde{f}$ in $E$ between $\tilde{v}$ and a vertex $\tilde{a}$ of $\widetilde{H}$. We need to show that $\tilde{a}$ corresponds to a vertex in $\varphi(A)_v$.

Consider the partial dual embedding $\{\widetilde{G}, \widetilde{H}, \Sigma, E\}$. In this embedding, the vertex $\tilde{v}$ lies in a face of the embedded graph  $\widetilde{H}\subset \Sigma$. Let $S$ denote the subgraph of  $\widetilde{H}$ that bound this face. Since the edge  $\tilde{f}\in E$ is embedded, it is incident to a vertex in $S$. Thus $\tilde{a}\in S$.
Finally, since $\{\widetilde{G}, \widetilde{H}, \Sigma\}$ is a dual embedding and  $\widetilde{G}=G\backslash A^c$,  the edges in $S$ correspond to the edges in $\varphi(A)_v$ and therefore $\tilde{a}$ corresponds to a vertex in $\varphi(A)_v$ as required. 
This completes the proof of Claim~3. 

\noindent \underline{Claim 4:} If $v\in \V(G)$ is not incident to an edge in $A$, then there exists a vertex $v' \in \V(H)$ with the property that  $e\in \E(G)$ is incident to $v$ if and only if  $\varphi(e)\in \E(H)$ is incident to $v'$. Moreover, both end of $e$ incident to $v$ if and only if both ends of $\varphi(e)$ are incident to $v'$.

\medskip

\noindent {\em Proof of Claim 4:} 
 $v\in \V(G)$ is not incident to an edge in $A$. Then there is a spherical component $S^2$ of $\Sigma$ which contains the vertex $\tilde{v}$ of $\widetilde{G}$ that corresponds to $v$. $S^2$ contains no other vertices of  $\widetilde{G}$.    Since $\{\widetilde{G}, \widetilde{H}, \Sigma, E\}$ is a partial dual embedding, this component also contains a single vertex $\tilde{a} \in \V(\widetilde{H})$ and some embedded edges in $E$ between the vertices.  The vertex in $H$ corresponding to $\tilde{a}$ is the vertex $v'$ in the statement. Then by the construction of $\varphi$, it is clear that an end of an edge $e\in \E(G)$ is incident to $v$ precisely when a unique end of $\varphi(e)\in \E(H)$ is incident to $v'$, as required.
This completes the proof of Claim~4 and of Theorem~\ref{t.main}.
\end{proof}

\end{document}